\DeclareSymbolFont{rsfscript}{OMS}{rsfs}{m}{b}
\DeclareSymbolFontAlphabet{\mathrsfs}{rsfscript}
\renewcommand{\mathcal}{\mathrsfs}
\def\bfit{\bfseries\itshape}
\newtheorem{theo}{Th\'eor\`eme}[section]
\newtheorem{prop}[theo]{Proposition}
\newtheorem{lem}[theo]{Lemme}
\newtheorem{coro}[theo]{Corollaire}
\def\example#1{{\refstepcounter{theo}\label{#1}\noindent\sc Exemple 
\arabic{section}.\arabic{theo} - }}
\def\equat{\refstepcounter{theo}$$~}
\def\endequat{\leqno{\boldsymbol{(\arabic{section}.\arabic{theo})}}~$$}
\def\AG{{\mathfrak A}}
    \def\QM{{\mathbb{Q}}}
    \def\RM{{\mathbb{R}}}
    \def\ZM{{\mathbb{Z}}}
    \def\CC{{\mathcal{C}}}
    \def\DC{{\mathcal{D}}}
    \def\EC{{\mathcal{E}}}
    \def\FC{{\mathcal{F}}}
    \def\HC{{\mathcal{H}}}
    \def\LC{{\mathcal{L}}}
    \def\OC{{\mathcal{O}}}
    \def\PC{{\mathcal{P}}}
    \def\SC{{\mathcal{S}}}
    \def\UC{{\mathcal{U}}}
    \def\VC{{\mathcal{V}}}
\def\Fti{{\tilde{F}}}
\def\g{\gamma}
\def\G{\Gamma}
\def\e{\varepsilon}
\def\ph{\varphi}
\def\l{\lambda}
\def\L{\Lambda}
\def\s{\sigma}
\def\t{\tau}
\def\phb{{\boldsymbol{\varphi}}}
          \def\psit{{\tilde{\psi}}}
\def\phba{{\bar{\varphi}}}
\def\psiba{{\bar{\psi}}}
\def\sigba{{\bar{\s}}}
\DeclareMathOperator{\Id}{{\mathrm{Id}}}
\DeclareMathOperator{\Ker}{{\mathrm{Ker}}}
\DeclareMathOperator{\Pos}{{\mathrm{Pos}}}
\DeclareMathOperator{\Posbar}{{\overline{\mathrm{Pos}}}}
\def\imp{\Rightarrow}
\def\to{\rightarrow}
\def\longto{\longrightarrow}
\def\fonction#1#2#3#4#5{\begin{array}{rccc}
{#1} : & {#2} & \longto & {#3} \\
& {#4} & \longmapsto & {#5} 
\end{array}}
\def\fonctio#1#2#3#4{\begin{array}{ccc}
{#1} & \longto & {#2} \\
{#3} & \longmapsto & {#4} 
\end{array}}
\def\vide{\varnothing}
\def\DS{\displaystyle}
\def\SS{\scriptstyle}
\def\finl{~$\SS \square$}
\def\infspe{\hspace{0.1em}\mathop{\preccurlyeq}\nolimits\hspace{0.1em}}
\def\lexp#1#2{\kern\scriptspace\vphantom{#2}^{#1}\kern-\scriptspace#2}
\def\le{\mathop{\leqslant}\nolimits}
\def\ge{\mathop{\geqslant}\nolimits}
\mathchardef\inferieur="321E
\mathchardef\superieur="321F
\def\eqna{\begin{eqnarray*}}
\def\endeqna{\end{eqnarray*}}
\def\tors{tores maximaux }
\def\tors{{\mathrm{tors}}}
\def\itemth#1{\item[${\mathrm{(#1)}}$]}
\long\def\@car#1#2\@nil{#1}
\long\def\@first#1#2{#1}
\long\def\@second#1#2{#2}
\long\def\ifempty#1{\expandafter\ifx\@car#1@\@nil @\@empty
  \expandafter\@first\else\expandafter\@second\fi}
\def\positif{\PC\! os}
\DeclareMathOperator{\can}{{\mathrm{can}}}
\DeclareMathOperator{\codim}{{\mathrm{codim}}}
\def\dotcup{\hskip1mm\dot{\cup}\hskip1mm}
\begin{document}

\baselineskip=16pt

\title{Topologie sur l'ensemble des parties \\ positives d'un r\'eseau}

\author{C\'edric Bonnaf\'e}
\address{\noindent 
Labo. de Math. de Besan\c{c}on (CNRS: UMR 6623), 
Universit\'e de Franche-Comt\'e, 16 Route de Gray, 25030 Besan\c{c}on
Cedex, France} 

\makeatletter
\email{cedric.bonnafe@univ-fcomte.fr}
\urladdr{http://www-math.univ-fcomte.fr/pp\_Annu/CBONNAFE/}

\thanks{L'auteur est en partie financ\'e par l'ANR (Projet 
No JC07-192339)}

\subjclass{According to the 2000 classification:
Primary 06F20}

\date{\today}
\def\abstractname{R\'esum\'e}

\begin{abstract} 
Nous d\'efinissons une notion de {\it partie positive} 
d'un r\'eseau $\Lambda$ et munissons l'ensemble de ces parties 
positives d'une topologie. Nous \'etudions ensuite 
quelques propri\'et\'es de cette topologie, en la comparant 
\`a celle de $V^*/\RM_{> 0}$, o\`u $V^*$ est l'espace vectoriel dual de 
$\RM \otimes_\ZM \Lambda$.

\bigskip

\noindent{\sc Abstract.} We define a notion of {\it positive part} 
of a lattice $\Lambda$ and we endow the set of such positive parts 
with a topology. We then study some properties of this topology, 
by comparing it with the one of $V^*/\RM_{> 0}$, where $V^*$ is 
the dual vector space of $\RM \otimes_\ZM \Lambda$.
\end{abstract}

\maketitle

\pagestyle{myheadings}

\markboth{\sc C. Bonnaf\'e}{\sc Topologie sur l'ensemble 
des parties positives d'un r\'eseau}

Fixons dans cet article un r\'eseau $\L$ et notons 
$V=\RM \otimes_\ZM \L$. Une partie $X$ de $\L$ est dite 
{\it positive} si les conditions 
suivantes sont satisfaites~:
\begin{itemize}\itemindent1cm
\item $\L=X \cup (-X)$.

\item $X+X \subset X$.

\item $X \cap (-X)$ est un sous-groupe de $\L$.
\end{itemize}
Nous noterons $\positif(\L)$ l'ensemble des parties 
positives de $\L$. Le but de cet article est d'\'etudier 
cet ensemble, et notamment de le munir d'une topologie, 
ceci de fa\c{c}on fonctorielle. 

Nous d\'efinissons notamment deux applications 
$\Posbar : V^*/\RM_{>0} \to \positif(\L)$ et 
$\pi : \positif(\L) \to V^*/\RM_{>0}$ dont nous montrons 
qu'elles sont continues 
et v\'erifient $\pi \circ \Posbar = \Id_{V^*/\RM_{>0}}$. 
De plus, $\Posbar$ induit un hom\'emorphisme sur son image, et 
celle-ci est dense dans $\positif(\L)$.  Ces propri\'et\'es 
sont r\'esum\'ees dans le th\'eor\`eme \ref{theo:topologie}. 

Nous terminons cet article par une \'etude des {\it arrangement d'hyperplans} 
dans $\positif(\L)$, en montrant notamment que l'intuition classique 
sur les arrangements d'hyperplans r\'eels reste valide. 

Ces r\'esultats seront utilis\'es ult\'erieurement par l'auteur 
pour \'etudier le comportement des cellules de Kazhdan-Lusztig 
lorsque les param\`etres varient \cite{bonnafe}.

\newpage

\tableofcontents

\bigskip

\section{Parties positives d'un r\'eseau\label{section positive}}

\medskip

Le but de cette section est d'\'etudier l'ensemble des parties positives 
de $\L$. Nous munirons cet ensemble d'une topologie dans la section suivante. 

\def\tors{{\mathrm{tor}}}

\bigskip

\subsection{D\'efinitions, pr\'eliminaires\label{sous positive}} 
Une partie $X$ de $\L$ est dite {\it positive} si les conditions 
suivantes sont satisfaites~:
\begin{itemize}\itemindent1cm
\itemth{P1} $\L=X \cup (-X)$.

\itemth{P2} $X+X \subset X$.

\itemth{P3} $X \cap (-X)$ est un sous-groupe de $\L$.
\end{itemize}
Nous noterons $\positif(\L)$ l'ensemble des parties 
positives de $\L$. Donnons quelques exemples. 
Pour commencer, notons que $\L \in \positif(\L)$. 
Soit $\G$ un groupe totalement ordonn\'e et soit $\ph : \L \to \G$ 
un morphisme de groupes. Posons 
$$\Pos(\ph)=\{\l \in \L~|~\ph(\l) \ge 0\}$$
$$\Pos^+(\ph)=\{\l \in \L~|~\ph(\l) > 0\}.\leqno{\text{et}}$$
Alors il est clair que 
\equat\label{noyau pos}
\Ker \ph = \Pos(\ph) \cap \Pos(-\ph) = \Pos(\ph) \cap -\Pos(\ph)
\endequat
et que 
\equat\label{positif exemple}
\text{\it $\Pos(\ph)$ est une partie positive de $\L$}.
\endequat

\bigskip

\begin{lem}\label{proprietes positives}
Soit $X$ une partie positive de $\L$. Alors~:
\begin{itemize}
\itemth{a} $-X \in \positif(\L)$. 

\itemth{b} $0 \in X$.

\itemth{c} Si $\l \in \L$ et si $r \in \ZM_{>0}$ est tel que 
$r\l \in X$. Alors $\l \in X$.

\itemth{d} $\L/(X \cap (-X))$ est sans torsion. 
\end{itemize}
\end{lem}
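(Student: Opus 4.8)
The plan is to dispatch (a) and (b) by direct inspection, prove (c) — the only substantive assertion — and then deduce (d) from (c). For (a), showing $-X \in \positif(\L)$, I would simply verify the three axioms for $-X$ one by one: axiom (P1) for $-X$ reads $\L = (-X) \cup X$, which is (P1) for $X$; axiom (P2) follows from $(-X) + (-X) = -(X+X) \subset -X$; and since $(-X) \cap (-(-X)) = X \cap (-X)$, axiom (P3) for $-X$ coincides with (P3) for $X$. For (b), I would note that $0 = -0$, so $0 \in X \iff 0 \in -X$; as $0 \in \L = X \cup (-X)$ by (P1), one of the two holds, hence both, and in particular $0 \in X$.

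The core of the lemma is (c), which I would prove by contradiction. If $r = 1$ there is nothing to do, so assume $r \ge 2$ and suppose $r\l \in X$ while $\l \notin X$. By (P1), the failure $\l \notin X$ forces $-\l \in X$, and then (P2) shows that the sum $-(r-1)\l$ of $r-1$ copies of $-\l$ again lies in $X$. Adding this to $r\l \in X$ and using (P2) once more yields $r\l + \bigl(-(r-1)\l\bigr) = \l \in X$, a contradiction. I expect this to be the main obstacle: the whole trick is to use (P1) to manufacture $-\l \in X$ and then exploit the additive closure (P2) to cancel the factor $r$ down to $1$.

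Assertion (d) should then follow formally from (c). Set $H = X \cap (-X)$, which is a subgroup by (P3); torsion-freeness of $\L/H$ amounts to showing that $r\l \in H$ with $r \in \ZM_{>0}$ implies $\l \in H$. Now $r\l \in H$ means both $r\l \in X$ and $r\l \in -X$, the latter giving $r(-\l) = -r\l \in X$. Applying (c) to $\l$ yields $\l \in X$, and applying (c) to $-\l$ yields $-\l \in X$, that is $\l \in -X$; hence $\l \in X \cap (-X) = H$, as desired. Thus (d) is just (c) applied twice, once to $\l$ and once to $-\l$.
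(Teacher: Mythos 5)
Your proof is correct and follows essentially the same route as the paper: the key step (c) is proved by exactly the same contradiction argument (use (P1) to get $-\l \in X$, then (P2) to write $\l = r\l + (r-1)(-\l) \in X$), and (a), (b), (d) are the same routine verifications the paper leaves implicit. Nothing to add.
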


\bigskip

\begin{proof}
(a) est imm\'ediat. 
(b) d\'ecoule de la propri\'et\'e (P1) des parties positives. 
(d) d\'ecoule de (c). Il nous reste \`a montrer (c). 
Soient $\l \in \L$ et $r \in \ZM_{>0}$ tels que $r\l \in X$. 
Si $\l \not\in X$, alors $-\l \in X$ 
d'apr\`es la propri\'et\'e (P1). 
D'o\`u $\l=r\l + (r-1)(-\l) \in X$ d'apr\`es (P2), ce qui est contraire 
\`a l'hypoth\`ese. Donc $\l \in X$. 
\end{proof}

\bigskip

%

Nous allons montrer une forme de r\'eciproque facile \`a la propri\'et\'e 
\ref{positif exemple}. Soit $X \in \positif(\L)$. Notons 
$\can_X : \L \to \L/(X \cap(-X))$ le morphisme canonique. Si $\g$ et $\g'$ 
appartiennent \`a $\L/(X \cap(-X))$, nous \'ecrirons $\g \le_X \g'$ s'il existe 
un repr\'esentant de $\g' - \g$ appartenant \`a $X$. Il est facile 
de v\'erifier que
\equat\label{relation independante}
\text{\it $\g \le_X \g'$ si et seulement si tout repr\'esentant 
de $\g'-\g$ appartient \`a $X$.}
\endequat
On d\'eduit alors facilement des propri\'et\'es (P1), (P2) et (P3) 
des parties positives que 
\equat\label{ordre X}
\text{\it $(\L/(X \cap (-X)),\le_X)$ est un groupe ab\'elien 
totalement ordonn\'e}
\endequat
et que 
\equat\label{X pos}
X=\Pos(\can_X).
\endequat

\bigskip

\subsection{Cons\'equences du th\'eor\`eme de Hahn-Banach} 
Si $X$ est une partie positive de $\L$, on pose 
$X^+=X \setminus (-X)$. On a alors 
\equat\label{disjonction}
\L = X \dotcup (-X^+) = X^+ \dotcup (-X) 
= X^+ \dotcup (X \cap (-X)) \dotcup (-X^+),
\endequat
o\`u $\dot{\cup}$ d\'esigne l'union disjointe. 
De plus, si $\ph : \L \to \G$ est un morphisme de groupes 
ab\'eliens et si $\G$ est un groupe totalement ordonn\'e, alors 
\equat\label{pos plus}
\Pos^+(\ph) = \Pos(\ph)^+.
\endequat
Si $\ph$ est une forme lin\'eaire sur $V$, nous noterons abusivement 
$\Pos(\ph)$ et $\Pos^+(\ph)$ les parties $\Pos(\ph|_\L)$ et 
$\Pos^+(\ph|_\L)$ de $\L$. 

\bigskip

\begin{lem}\label{equivalent separant}
Soit $X$ une partie positive propre de $\L$, soit $\G$ un groupe ab\'elien 
totalement ordonn\'e archim\'edien 
et soit $\ph : \L \to \G$ un morphisme de groupes. 
Alors les conditions suivantes sont \'equivalentes~:
\begin{itemize}
\itemth{1} $X \subseteq \Pos(\ph)$;

\itemth{2} $X^+ \subseteq \Pos(\ph)$;

\itemth{3} $\Pos^+(\ph) \subseteq X^+$;

\itemth{4} $\Pos^+(\ph) \subseteq X$.
\end{itemize}
\end{lem}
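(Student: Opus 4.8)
The plan is to prove the four conditions equivalent along the cycle $(1)\Rightarrow(4)\Rightarrow(3)\Rightarrow(2)\Rightarrow(1)$, after first isolating the single place where the archimedean hypothesis on $\G$ is genuinely used. The content of the whole lemma is concentrated in one auxiliary statement, which I would establish first:
\[
X^+\subseteq\Pos(\ph)\ \Longrightarrow\ X\cap(-X)\subseteq\Ker\ph.
\]
Granting this, each implication of the cycle reduces to elementary sign-chasing based on (P1), (P2), and the disjoint decomposition of \ref{disjonction}, which in particular gives $\L$-subset $X=X^+\dotcup(X\cap(-X))$.

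Here is how I would run the cycle. For $(1)\Rightarrow(4)$: if some $\l\in\Pos^+(\ph)$ were not in $X$, then $-\l\in X^+\subseteq X\subseteq\Pos(\ph)$ would force $\ph(\l)\le 0$, against $\ph(\l)>0$. For $(3)\Rightarrow(2)$ one reads the same argument backwards: for $\l\in X^+$ with $\ph(\l)<0$ one gets $-\l\in\Pos^+(\ph)\subseteq X^+\subseteq X$, contradicting $-\l\notin X$; hence $\ph(\l)\ge 0$. The step $(2)\Rightarrow(1)$ is then immediate from the auxiliary statement, since $X=X^+\cup(X\cap(-X))\subseteq\Pos(\ph)\cup\Ker\ph=\Pos(\ph)$. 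Finally, for $(4)\Rightarrow(3)$ I would first note that $(4)$ yields $(2)$ by the backwards argument just given (if $\l\in X^+$ had $\ph(\l)<0$ then $-\l\in\Pos^+(\ph)\subseteq X$, a contradiction), then invoke the auxiliary statement to get $X\cap(-X)\subseteq\Ker\ph$; any $\l\in\Pos^+(\ph)\subseteq X$ then has $\ph(\l)\ne 0$, so $\l\notin X\cap(-X)$, i.e. $\l\in X\setminus(-X)=X^+$.

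The crux, and the only obstacle I expect, is the auxiliary statement, which is where properness of $X$ and the archimedean property of $\G$ enter. I would argue by contradiction: suppose $\mu\in X\cap(-X)$ with $\ph(\mu)\ne 0$, and, since $X\cap(-X)$ is a subgroup by (P3), replace $\mu$ by $-\mu$ if necessary to assume $\ph(\mu)>0$. Using that $X$ is proper, choose $\alpha\in X^+$ with $-\alpha\notin X$, so that $\ph(\alpha)\ge 0$ by hypothesis. Because $-\mu\in X$ and $X+X\subseteq X$, one has $\alpha-n\mu\in X$ for every $n\in\ZM_{>0}$. The archimedean property now furnishes an $n$ with $n\,\ph(\mu)>\ph(\alpha)$, whence $\ph(\alpha-n\mu)<0$, so $\alpha-n\mu\notin\Pos(\ph)$ and therefore $\alpha-n\mu\notin X^+$; being in $X$, it must lie in $X\cap(-X)$. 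Adding $n\mu\in X\cap(-X)$ then gives $\alpha\in X\cap(-X)\subseteq-X$, i.e. $-\alpha\in X$, contradicting the choice of $\alpha$. This forces $\ph(\mu)=0$. Everything outside this archimedean pushing argument is formal, so I anticipate that verifying this one step carefully is the main work.
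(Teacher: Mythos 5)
Your proof is correct. I verified the auxiliary statement and all four implications; the only point you use without justification --- that properness of $X$ yields some $\alpha \in X^+$ --- is immediate (if $X^+ = \vide$ then $X = -X$, and (P1) gives $X = \L$), and the paper itself uses this fact silently elsewhere (in the proof of Theorem \ref{hahn banach}).

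Your route is genuinely different in organization from the paper's, though the archimedean core is the same in spirit. The paper runs the opposite cycle $(1)\Rightarrow(2)\Rightarrow(3)\Rightarrow(4)\Rightarrow(1)$: the first and third implications are trivial; the archimedean hypothesis enters only in $(2)\Rightarrow(3)$, where, from a hypothetical $\l \in -X$ with $\ph(\l) > 0$, one subtracts large multiples $k\l$ from an \emph{arbitrary} $\mu \in \L$ to force $\mu = (\mu - k\l) + k\l \in -X$, so that $\L \subseteq -X$, contradicting properness; and $(4)\Rightarrow(1)$ is then obtained by a complementation duality --- taking complements in $\L$ turns (4) for $(X,\ph)$ into (2) for $(-X,-\ph)$, to which $(2)\Rightarrow(3)$ is applied before taking complements again. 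You instead isolate the kernel containment $X \cap (-X) \subseteq \Ker \ph$ as the unique archimedean ingredient; your pushing argument is the same manoeuvre (subtract large multiples to make $\ph$ negative), but the resulting contradiction is local (the chosen $\alpha \in X^+$ lands in $X \cap (-X)$, hence in $-X$) rather than global ($\L \subseteq -X$). What your organization buys: the duality trick disappears, all four implications become uniform one-line sign-chases from \ref{disjonction}, and the auxiliary lemma records exactly what archimedicity contributes --- under (2), $\ph$ vanishes on $X \cap (-X)$, i.e.\ factors through the map $\can_X$ of \ref{X pos}. The paper's proof is slightly shorter; yours is more modular, and the kernel lemma is of independent use.
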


\begin{proof}
Il est clair que (1) implique (2) et que (3) implique (4). 

\medskip

Montrons que (2) implique (3). Supposons 
donc que (2) est v\'erifi\'ee. Soit $\l \in \L$ tel que 
$\ph(\l) > 0$ et supposons que $\l \not\in X^+$. 
Alors, d'apr\`es \ref{disjonction}, $\l \in -X$. 
Or, si $\mu \in \L$, il existe $k \in \ZM_{> 0}$ tel que 
$\ph(\mu - k \l)=\ph(\mu)-k\ph(\l) < 0$ (car $\G$ est archim\'edien). Donc $\mu-k\l \not\in X^+$ 
d'apr\`es (2). Donc $\mu - k \l \in -X$ d'apr\`es \ref{disjonction}. 
Donc, $\mu=(\mu -k\l) + k\l \in (-X)$. Donc $\L \subseteq -X$, 
ce qui est contraire \`a l'hypoth\`ese.

\medskip

Montrons que (4) implique (1). 
Supposons donc que $\Pos^+(\ph) \subseteq X$. 
En prenant le compl\'ementaire dans $\L$, 
on obtient $(-X^+)=(-X)^+ \subseteq \Pos(-\ph)$ et donc, 
puisque (2) implique (3), on a $\Pos^+(-\ph) \subseteq -X^+$. 
En reprenant le compl\'ementaire dans $\L$, on 
obtient $X \subseteq \Pos(\ph)$. 
\end{proof}

\bigskip

Nous aurons aussi besoin du lemme suivant~:

\bigskip

\begin{lem}\label{solutions}
Soient $\l_1$,\dots, $\l_n$ des \'el\'ements de $\L$ et supposons 
trouv\'e un $n$-uplet $t_1$,\dots, $t_n$ de nombres r\'eels 
{\bfit strictement} positifs tels que $t_1 \l_1 + \cdots + t_n \l_n=0$. 
Alors il existe des entiers naturels non nuls $r_1$,\dots, $r_n$ tels que 
$r_1 \l_1 + \cdots + r_n \l_n = 0$.
\end{lem}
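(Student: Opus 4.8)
The plan is to recast the statement as an assertion about a homogeneous linear system with integer coefficients, and then to exploit the fact that the kernel of such a system is a rational subspace. First I would fix a $\ZM$-basis of $\L$, thereby identifying $\L$ with $\ZM^d$ and $V$ with $\RM^d$, where $d$ is the rang of $\L$. Writing each $\l_i$ in this basis as an integer vector, I consider the linear map
\[
f : \RM^n \longto V, \qquad f(x_1,\dots,x_n) = x_1 \l_1 + \cdots + x_n \l_n,
\]
which, in the chosen bases, is represented by a matrix with entries in $\ZM$. The hypothesis says precisely that the vector $t=(t_1,\dots,t_n)$ belongs to $W := \Ker f$ and also to the open positive orthant $(\RM_{>0})^n$, while the conclusion asks us to produce an element of $W \cap (\ZM_{>0})^n$.

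The key point is that $W$ is a \emph{rational} subspace of $\RM^n$: because $f$ is given by an integer matrix, Gaussian elimination over $\QM$ yields a basis $v_1,\dots,v_k$ of $W$ whose coordinates are rational. As a consequence, the $\QM$-span $\QM v_1 + \cdots + \QM v_k$ is contained in $W \cap \QM^n$ and is dense in $W$: any $w = \sum_j c_j v_j \in W$ with $c_j \in \RM$ is approximated as closely as we wish by $\sum_j c_j' v_j$ upon choosing rationals $c_j'$ near the $c_j$.

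Next I would combine this density with an openness argument. Since $t \in W$ has all its coordinates strictly positive, it lies in $W \cap (\RM_{>0})^n$, which is open in $W$. By the density established above, this open neighbourhood of $t$ inside $W$ contains a rational point $q \in W \cap \QM^n$ whose coordinates are still strictly positive. Finally, choosing $N \in \ZM_{>0}$ to be a common denominator of the coordinates of $q$ and setting $r = Nq = (r_1,\dots,r_n)$, I obtain $r_i \in \ZM_{>0}$ for every $i$, together with $f(r) = N f(q) = 0$, i.e. $r_1 \l_1 + \cdots + r_n \l_n = 0$, which is the desired conclusion.

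There is no deep obstacle here, the whole mechanism resting on the rationality of $\Ker f$. The one step that deserves genuine care is the claim that a rational approximation of $t$ inside $W$ can be chosen with all its coordinates still strictly positive; this is exactly where the \emph{strict} positivity of the $t_i$ (rather than mere nonnegativity) is indispensable, as it places $t$ in the open orthant and not merely on its boundary, so that a small enough perturbation remains positive.
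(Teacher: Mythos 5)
Your proof is correct and follows essentially the same route as the paper's: both rest on the rationality of the solution space of an integer linear system (obtained by Gaussian elimination over $\QM$), a density-plus-openness perturbation to find a strictly positive rational solution near $t$, and clearing denominators. The only cosmetic difference is that you work directly with the homogeneous kernel $\Ker f$ and a rational basis of it, whereas the paper adds the normalization $u_1+\cdots+u_n=1$ and parametrizes the resulting affine solution set by a rational base point plus rational direction vectors.
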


\bigskip

\begin{proof}
Notons $\SC$ l'ensemble des $n$-uplets $(u_1,\dots,u_n)$ 
de nombres r\'eels qui satisfont 
$$\begin{cases}
u_1+\cdots + u_n = 1, &\\
u_1 \l_1 + \cdots + u_n \l_n = 0.&\\
\end{cases}$$
\'Ecrit dans une base de $\L$, ceci est un syst\`eme lin\'eaire 
d'\'equations \`a coefficients dans $\QM$. Le proc\'ed\'e d'\'elimination de 
Gauss montre que l'existence d'une solution {\it r\'eelle} implique 
l'existence d'une solution {\it rationnelle} 
$t^\circ = (t_1^\circ,\dots, t_n^\circ)$ 
et l'existence de vecteurs $v_1$,\dots, $v_r \in \QM^n$ tels que 
$$\SC=\{t^\circ + x_1 v_1 + \cdots +x_r v_r~|~(x_1,\dots,x_r) \in \RM^r\}.$$
En particulier, il existe $x_1$,\dots, $x_r \in \RM$ tels que 
$$(t_1,\dots,t_n) = t^\circ + x_1 v_1 + \cdots +x_r v_r.$$
Puisque $t_1$,\dots, $t_n$ sont strictement positifs, 
il existe $x_1'$,\dots, $x_r'$ dans $\QM$ tels que les 
coordonn\'ees de $t^\circ + x_1' v_1 + \cdots +x_r' v_r$ soient 
strictement positives. Posons alors 
$(u_1,\dots,u_n)=t^\circ + x_1' v_1 + \cdots +x_r' v_r$. 
On a donc $u_i \in \QM_{> 0}$ pour tout $i$ et 
$$u_1 \l_1 + \cdots + u_n \l_n = 0.$$
Quitte \`a multiplier par le produits des d\'enominateurs des $u_i$, 
on a trouv\'e $r_1$,\dots, $r_n \in \ZM_{> 0}$ tels que 
$$r_1 \l_1 + \cdots + r_n \l_n = 0,$$
comme attendu.
\end{proof}

\bigskip

\begin{theo}\label{hahn banach}
Soit $X$ une partie positive de $\L$ diff\'erente de $\L$. Alors~:
\begin{itemize}
\itemth{a} Il existe une forme lin\'eaire $\ph$ sur $V$ telle que 
$X \subseteq \Pos(\ph)$. 

\itemth{b} Si $\ph$ et $\ph'$ sont deux formes lin\'eaires 
sur $V$ telles que $X \subseteq \Pos(\ph) \cap \Pos(\ph')$, 
alors il existe $\kappa \in \RM_{> 0}$ 
tel que $\ph'=\kappa \ph$.
\end{itemize}
\end{theo}

\bigskip

\begin{proof}
Notons $\CC^+$ l'enveloppe convexe de $X^+$. 
Notons que $X^+$ (et donc $\CC^+$) 
est non vide car $X \neq \L$. Nous allons commencer 
par montrer que $0 \not\in \CC^+$. Supposons donc que $0 \in \CC^+$. 
Il existe donc $\l_1$,\dots, $\l_n$ dans $X^+$ et $t_1$,\dots, $t_n$ 
dans $\RM_{> 0}$ tels que 
$$\begin{cases}
t_1+\cdots + t_n = 1, &\\
t_1 \l_1 + \cdots + t_n \l_n = 0.&\\
\end{cases}$$
D'apr\`es le lemme \ref{solutions}, il existe $r_1$,\dots, $r_n \in \ZM_{> 0}$ 
tels que $m_1 \l_1 = - (m_2 \l_2 + \cdots + m_n \l_n)$. Donc 
$m_1\l_1 \in X \cap -X$ 
(voir la propri\'et\'e (P2)). Donc, d'apr\`es le lemme 
\ref{proprietes positives} (a) et (c), on a $\l_1 \in X \cap -X$, 
ce qui est impossible car $\l_1 \in X^+ = X \setminus (-X)$. 
Cela montre donc que 
$$0 \not\in \CC^+.$$
L'ensemble $\CC^+$ \'etant convexe, il d\'ecoule du th\'eor\`eme de 
Hahn-Banach qu'il existe une forme lin\'eaire non nulle $\ph$ 
sur $V$ telle que
$$\CC^+ \subseteq \{\l \in V~|~\ph(\l) \ge 0\}.$$
En particulier, 
$$X^+ \subseteq \CC^+ \cap \L \subseteq \Pos(\ph).\leqno{(*)}$$
D'apr\`es le lemme \ref{equivalent separant}, et puisque 
$\RM$ est archim\'edien, on a bien $X \subseteq \Pos(\ph)$. 
Cela montre (a). 

\medskip

Soit $\ph'$ une autre forme lin\'eaire telle que $X \subseteq \Pos(\ph')$. 
Posons $U=\{\l \in V~|~\ph(\l) > 0$ et $\ph'(\l) < 0\}$. 
Alors $U$ est un ouvert de $V$. Fixons une $\ZM$-base $(e_1,\dots,e_d)$ 
de $\L$. Si $U \neq \vide$, alors il existe 
$\l \in \QM \otimes_\ZM \L$ et $\e \in \QM_{> 0}$ tels que 
$\l$, $\l + \e e_1$,\dots, $\l+\e e_d$ appartiennent \`a $U$. 
Quitte \`a multiplier par le produit des d\'enominateurs de 
$\e$ et des coordonn\'ees de $\l$ dans la base $(e_1,\dots,e_d)$, 
on peut supposer que $\l \in \L$ et $\e \in \ZM_{> 0}$. Mais 
il est clair que $U \cap X^+=\vide$ et $U \cap (-X^+) = \vide$. 
Donc $U \cap \L$ est contenu dans $X \cap (-X)$. Ce dernier 
\'etant un sous-groupe, on en d\'eduit que $\e e_i \in X \cap (-X)$ 
pour tout $i$. D'o\`u, d'apr\`es le lemme \ref{proprietes positives} (c), 
$e_i \in X \cap (-X)$ pour tout $i$. Donc $X=\L$, ce qui 
est contraire \`a l'hypoth\`ese. On en d\'eduit que $U$ est 
vide, c'est-\`a-dire qu'il existe $\kappa \in \RM_{> 0}$ tel que 
$\ph'=\kappa\ph$. D'o\`u (b).
\end{proof}

\bigskip

Si $\ph$ est une forme lin\'eaire sur $V$, nous noterons 
$\phba$ sa classe dans $V^*/\RM_{>0}$. Nous noterons $p : V^* \longto V^*/\RM_{>0}$
la projection canonique. L'application $\Pos : V^* \to \positif(\L)$ 
se factorise \`a travers $p$ en une application $\Posbar : 
V^*/\RM_{>0} \to \positif(\L)$ 
rendant le diagramme
$$\diagram
V^* \ddrrto^{\DS{\Pos}} \ddto_{\DS{p}}&& \\
&&\\
V^*\!/\RM_{>0} \rrto^{\DS{\Posbar}} && \positif(\L)
\enddiagram$$
commutatif. D'autre part, 
si $X \in \positif(\L)$, nous noterons $\pi(X)$ l'unique 
\'el\'ement $\phba \in V^*/\RM_{>0}$ tel que $X \subseteq \Posbar(\phba)$ 
(voir le th\'eor\`eme \ref{hahn banach}). On a donc d\'efini 
deux applications
$$\diagram
V^*/\RM_{>0}\rrto^{\DS{\Posbar}} && 
\positif(\L) \rrto^{\DS{\pi}} && V^*\!/\RM_{>0} 
\enddiagram$$
et le th\'eor\`eme \ref{hahn banach} (b) montre que
\equat\label{pi pos}
\pi \circ \Posbar = \Id_{V^*\!/\RM_{>0}}.
\endequat
Donc $\pi$ est surjective et $\Posbar$ est injective. 
En revanche, ni $\pi$, ni $\Posbar$ ne sont des bijections (sauf 
si $\L$ est de rang $1$). Nous allons d\'ecrire les fibres de $\pi$~:

\bigskip 

\begin{prop}\label{fibres pi}
Soit $\ph$ une forme lin\'eaire non nulle sur $V$. Alors 
l'application 
$$\fonction{i_{\phba}}{\positif(\Ker \ph|_\L)}{\pi^{-1}(\phba)}{X}{X 
\cup \Pos^+(\ph)}$$
est bien d\'efinie et bijective. Sa r\'eciproque est l'application 
$$\fonctio{\pi^{-1}(\phba)}{\positif(\Ker \ph|_\L)}{Y}{Y \cap \Ker \ph|\l.}$$
\end{prop}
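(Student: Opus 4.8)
The plan is to set $\L_0 = \Ker \ph|_\L$ and to organise everything around the partition of $\L$ obtained by applying the decomposition \ref{disjonction} to the positive part $\Pos(\ph)$. Since $\Pos(\ph)^+ = \Pos^+(\ph)$ by \ref{pos plus} and $\Pos(\ph) \cap (-\Pos(\ph)) = \L_0$ by \ref{noyau pos}, this reads
$$\L = \Pos^+(\ph) \dotcup \L_0 \dotcup (-\Pos^+(\ph)).$$
It is this three-fold partition that drives the whole proof: $\L_0$ is the ``flat'' locus on which the two maps exchange data, while membership in the fibre $\pi^{-1}(\phba)$ (that is, $Y \subseteq \Pos(\ph)$) forbids the half-space $-\Pos^+(\ph)$ and, as we shall see, forces all of $\Pos^+(\ph)$.

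First I would check that $i_{\phba}$ lands where claimed. Given $X \in \positif(\L_0)$, put $Y = X \cup \Pos^+(\ph)$; as $X \subseteq \L_0$ this is a disjoint union. Properties (P1)--(P3) for $Y$ follow by a short case analysis against the partition: (P1) because $X \cup (-X) = \L_0$; (P2) because a sum is governed by $\ph$ (strictly positive as soon as one summand lies in $\Pos^+(\ph)$, and otherwise handled by (P2) for $X$ inside $\L_0$); and (P3) because all cross terms vanish, leaving $Y \cap (-Y) = X \cap (-X)$, a subgroup. Finally $Y \subseteq \Pos(\ph)$, and $Y \neq \L$ since $\ph \neq 0$ makes $\Pos^+(\ph)$ nonempty while $-\Pos^+(\ph)$ misses $Y$; hence $\pi(Y) = \phba$ by the uniqueness in \ref{hahn banach}(b), so $Y \in \pi^{-1}(\phba)$.

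Symmetrically, for $Y \in \pi^{-1}(\phba)$ I would verify that $Y \cap \L_0 \in \positif(\L_0)$: here (P1) uses that $\L_0$ is a subgroup, so $\l \in \L_0 \cap (-Y)$ forces $-\l \in Y \cap \L_0$, while (P2) and (P3) are inherited from $Y$ together with the subgroup $\L_0$. The identity $j \circ i = \Id$ is then immediate, since $\Pos^+(\ph) \cap \L_0 = \vide$ gives $(X \cup \Pos^+(\ph)) \cap \L_0 = X$.

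The main obstacle is the remaining identity $i \circ j = \Id$, namely $(Y \cap \L_0) \cup \Pos^+(\ph) = Y$ for every $Y$ in the fibre. The inclusion $\subseteq$ reduces to showing $\Pos^+(\ph) \subseteq Y$, and this is exactly where I would invoke Lemma \ref{equivalent separant}: its equivalence $(1)\Leftrightarrow(4)$, valid because $\RM$ is archimedean, converts the defining condition $Y \subseteq \Pos(\ph)$ of the fibre into $\Pos^+(\ph) \subseteq Y$. The reverse inclusion $\supseteq$ is then a clean consequence of the partition: any $y \in Y$ sits in one of the three pieces, the piece $-\Pos^+(\ph)$ being excluded by $Y \subseteq \Pos(\ph)$, so $y$ lands in $\Pos^+(\ph)$ or in $Y \cap \L_0$. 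This archimedean input through \ref{equivalent separant} is the only substantive ingredient; everything else is bookkeeping against the decomposition \ref{disjonction}.
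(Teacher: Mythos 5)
Your proof is correct and follows essentially the same route as the paper's: verify (P1)--(P3) for $X \cup \Pos^+(\ph)$ via the partition \ref{disjonction}, recover $X$ as $Y \cap \Ker\ph|_\L$ (you re-prove inline what the paper cites as Corollary \ref{pos inclusion}), and obtain $\Pos^+(\ph) \subseteq Y$ for $Y$ in the fibre from Lemma \ref{equivalent separant}, exactly as the paper does. The only difference is organisational: you phrase the argument as two mutually inverse maps, where the paper phrases it as injectivity plus surjectivity.
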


\noindent{\sc Remarque - } 
Il est facile de voir que $\pi^{-1}(\bar{0})=\{\L\}$. Nous 
pouvons aussi d\'efinir une application 
$i_{\bar{0}} : \positif(\L) \to \positif(\L)$ 
par la m\^eme formule que dans la proposition 
\ref{fibres pi}~: alors $i_{\bar{0}}$ est tout simplement 
l'application identit\'e mais on a dans ce cas-l\`a 
$\pi^{-1}(\bar{0})\neq i_{\bar{0}}(\positif(\L))$.\finl

\bigskip

\begin{proof}
Montrons tout d'abord que l'application $i_\phba$ est bien d\'efinie. 
Soit $X \in \positif(\Ker \ph|\L)$. Posons $Y=X \cup \Pos^+(\ph)$. 
Montrons que $Y$ est une partie positive de $\L$. Avant cela, 
notons que 
$$Y \subseteq \Pos(\ph).\leqno{(*)}$$

(1) Si $\l \in \L$, deux cas se pr\'esentent. Si $\ph(\l) \neq 0$, 
alors $\l \in \Pos^+(\ph) \cup -\Pos^+(\ph) \subseteq Y \cup (-Y)$. 
Si $\ph(\l)=0$, alors $\l \in \Ker \ph|_\L$, donc 
$\l \in X \cup (-X) \subseteq Y \cup (-Y)$ car $X$ est 
une partie positive de $\Ker \ph|_\L$. Donc $\L=Y \cup (-Y)$. 

\smallskip

(2) Soient $\l$, $\mu \in Y$. Montrons que $\l+\mu \in Y$. 
Si $\ph(\l+\mu) > 0$, alors $\l+\mu \in \Pos^+(\ph) \subseteq Y$. 
Si $\ph(\l+\mu)=0$, alors il r\'esulte de $(*)$ que 
$\ph(\l)=\ph(\mu)=0$, donc $\l$, $\mu \in \Ker \ph|_\L$. 
En particulier, $\l$, $\mu \in X$ et donc 
$\l+\mu \in X+X \subseteq X \subseteq Y$. 
Donc $Y + Y \subseteq Y$.

\smallskip

(3) On a $Y \cap (-Y)=X \cap (-X)$, donc $Y \cap (-Y)$ est un sous-groupe 
de $\L$. 

\medskip

Les points (1), (2) et (3) ci-dessus montrent que $Y$ est une partie 
positive de $\L$. L'inclusion $(*)$ montre que $\pi(Y)=\phba$, 
c'est-\`a-dire que $Y \in \pi^{-1}(\phba)$. Donc l'application 
$i_\phba$ est bien d\'efinie.

\medskip

Elle est injective car, si $X \in \positif(\Ker \ph|_\L)$, alors 
$X \cap \Pos^+(\ph) = \vide$. Montrons maintenant 
qu'elle est surjective. Soit $Y \in \pi^{-1}(\phba)$. 
Posons $X=Y \cap \Ker \ph|_\L$. Alors $X$ est une partie 
positive de $\Ker \ph|_\L$ d'apr\`es le corollaire 
\ref{pos inclusion}. Posons $Y'=X \cup \Pos^+(\ph)$. 
Il nous reste \`a montrer que $Y=Y'$. 

Tout d'abord, $\Pos^+(\ph) \subseteq Y$ car $\pi(Y)=\phba$ 
par hypoth\`ese 
et $X \subseteq Y$. Donc $Y' \subseteq Y$. R\'eciproquement, 
si $\l \in Y$, deux cas se pr\'esentent. Si $\ph(\l) > 0$, 
alors $\l \in \Pos^+(\ph) \subseteq Y'$. Si $\ph(\l)=0$, 
alors $\l \in Y \cap \Ker \ph|_\L=X \subseteq Y'$. 
Dans tous les cas, $\l \in Y'$.
\end{proof}

\bigskip

\example{maximal} 
Si $\ph$ est une forme lin\'eaire non nulle sur $V$, alors 
$\Pos(\ph) = i_{\phba}(\Ker \ph|_\L)$.\finl

\bigskip

Nous pouvons maintenant classifier les parties positives de 
$\L$ en termes de formes lin\'eaires. Notons $\FC(\L)$ 
l'ensemble des suites finies $(\ph_1,\dots,\ph_r)$ 
telles que (en posant $\ph_0=0$), pour tout $i \in \{1,2,\dots,r\}$, 
$\ph_i$ soit une forme lin\'eaire non nulle sur 
$\RM \otimes_\ZM (\L \cap \Ker \ph_{i-1})$. 
Par convention, nous supposerons que la suite vide, not\'ee $\vide$, 
appartient \`a $\FC(\L)$. 

Posons $d = \dim V$. 
Notons que, si $(\ph_1,\dots,\ph_r) \in \FC(\L)$, alors $r \le d$. 
Nous d\'efinissons donc l'action suivante de $(\RM_{>0})^d$ sur $\FC(\L)$~: 
si $(\kappa_1,\dots,\kappa_d) \in (\RM_{>0})^d$ et si 
$(\ph_1,\dots,\ph_r) \in \FC(\L)$, on pose 
$$(\kappa_1,\dots,\kappa_d)\cdot (\ph_1,\dots,\ph_r) = 
(\kappa_1 \ph_1,\dots,\kappa_r \ph_r).$$
Munissons $\RM^r$ de l'ordre lexicographique~: c'est un groupe ab\'elien 
totalement ordonn\'e et $(\ph_1,\dots,\ph_r) : \L \to \RM^r$ 
est un morphisme de groupes. Donc $\Pos(\ph_1,\dots,\ph_r)$ est 
bien d\'efini et appartient \`a $\positif(\L)$. En fait, toute 
partie positive de $\L$ peut \^etre retrouv\'ee ainsi~:

\bigskip

\begin{prop}\label{surjection pos}
L'application 
$$\fonctio{\FC(\L)}{\positif(\L)}{\phb}{\Pos(\phb)}$$
est bien d\'efinie et induit une bijection 
$\FC(\L)/(\RM_{> 0})^d \stackrel{\sim}{\longrightarrow} \positif(\L)$. 
\end{prop}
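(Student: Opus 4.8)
The plan is to prove the two assertions separately: first that the assignment $\phb \mapsto \Pos(\phb)$ is well defined and constant on the orbits of $(\RM_{>0})^d$, and then that the resulting map on $\FC(\L)/(\RM_{>0})^d$ is a bijection, by induction on $d = \dim V$. Well-definedness, together with $\Pos(\phb) \in \positif(\L)$, is exactly \ref{positif exemple} applied to the group homomorphism $(\ph_1,\dots,\ph_r) : \L \to \RM^r$ with $\RM^r$ lexicographically ordered. Setting $\L_0 = \L$ and $\L_i = \L_{i-1} \cap \Ker \ph_i$, one sees that $\Pos(\phb)$ depends only on the sign of the first nonzero $\ph_i(\l)$; hence replacing each $\ph_i$ by $\kappa_i \ph_i$ with $\kappa_i \in \RM_{>0}$ leaves both the flag $(\L_i)_i$ and the set $\Pos(\phb)$ unchanged, so the map factors through $\FC(\L)/(\RM_{>0})^d$. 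I would also record that $\rg \L_i < \rg \L_{i-1}$ (since $\ph_i|_{\L_{i-1}}$ is a nonzero homomorphism into $\RM$, its image is a nontrivial free group and its kernel drops in rank), which forces $r \le d$ and makes the action of $(\RM_{>0})^d$ meaningful.

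The technical heart is a single recursive identity. For $\phb = (\ph_1,\dots,\ph_r)$ with $r \ge 1$, writing $\L_1 = \Ker \ph_1|_\L$ and $(\ph_2,\dots,\ph_r) \in \FC(\L_1)$, one has
\[
\Pos(\ph_1,\dots,\ph_r) = \Pos(\ph_2,\dots,\ph_r) \cup \Pos^+(\ph_1) = i_{\overline{\ph_1}}\bigl(\Pos(\ph_2,\dots,\ph_r)\bigr),
\]
together with $\Pos(\ph_1,\dots,\ph_r) \cap \L_1 = \Pos(\ph_2,\dots,\ph_r)$. Both are immediate from the lexicographic order: if $\ph_1(\l) \ne 0$ the sign of $\l$ is decided by $\ph_1$, while if $\ph_1(\l) = 0$ then $\l \in \L_1$ and its sign is decided by the tail. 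This identifies the classification by flags with the description of the fibres of $\pi$ in \ref{fibres pi}, and it reduces each inductive step to passing between $\L$ and $\L_1$.

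For surjectivity, if $X = \L$ then $X = \Pos(\vide)$; if $X \ne \L$, Theorem \ref{hahn banach}(a) gives a nonzero form $\ph_1$ on $V$ with $X \subseteq \Pos(\ph_1)$, so $\pi(X) = \overline{\ph_1}$ by \ref{hahn banach}(b), and \ref{fibres pi} yields $X = i_{\overline{\ph_1}}(X')$ with $X' = X \cap \L_1 \in \positif(\L_1)$. As $\rg \L_1 < d$, the induction hypothesis provides $(\ph_2,\dots,\ph_r) \in \FC(\L_1)$ with $X' = \Pos(\ph_2,\dots,\ph_r)$; prepending $\ph_1$ gives an element of $\FC(\L)$, and the recursive identity gives $X = \Pos(\ph_1,\dots,\ph_r)$. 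For injectivity, suppose $\Pos(\ph_1,\dots,\ph_r) = \Pos(\psi_1,\dots,\psi_s) =: X$. If $X = \L$, both sequences must be empty, since any nonzero leading form would force $\Pos(\ph_1,\dots,\ph_r) \subseteq \Pos(\ph_1) \ne \L$; so they lie in the same orbit. If $X \ne \L$, then $r,s \ge 1$, and from $X \subseteq \Pos(\ph_1) \cap \Pos(\psi_1)$ with \ref{hahn banach}(b) we get $\psi_1 = \kappa_1 \ph_1$, whence $\Ker \ph_1|_\L = \Ker \psi_1|_\L = \L_1$. Intersecting with $\L_1$ and applying the recursive identity, $\Pos(\ph_2,\dots,\ph_r) = \Pos(\psi_2,\dots,\psi_s)$ in $\positif(\L_1)$, so the induction hypothesis on $\L_1$ gives $r = s$ and $\psi_i = \kappa_i \ph_i$ for $i \ge 2$; hence $\phb$ and $\psib$ are in the same $(\RM_{>0})^d$-orbit.

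The apparent main obstacle, surjectivity, has in fact already been defused by Theorem \ref{hahn banach}: producing the leading form $\ph_1$ that bounds $X$ from below is precisely the Hahn--Banach argument, and its uniqueness up to a positive scalar is exactly what makes $\pi$ and the fibre description \ref{fibres pi} available. Once these are granted, the only genuine care needed is bookkeeping: checking that the flag $\L \supsetneq \L_1 \supsetneq \cdots$ has strictly decreasing rank (so the induction terminates and $r \le d$), and verifying that the recursive identity correctly matches $\Pos$ of a flag with the operation $i_{\overline{\ph_1}}$ of \ref{fibres pi}.
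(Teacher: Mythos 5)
Your proof is correct and follows exactly the route the paper takes: the paper's own proof is a one-line remark that the result follows by induction on the rank of $\L$ using Theorem \ref{hahn banach} and Proposition \ref{fibres pi}, which is precisely the induction you carry out. Your write-up simply makes explicit the details (the recursive identity $\Pos(\ph_1,\dots,\ph_r)=i_{\overline{\ph_1}}(\Pos(\ph_2,\dots,\ph_r))$, the rank drop, and the two halves of the bijection) that the paper leaves to the reader.
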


\noindent{\sc Remarque - } Dans cette proposition, on a pos\'e 
par convention $\Pos(\vide)=\L$.\finl

\bigskip

\begin{proof}
Cela r\'esulte imm\'ediatement d'un raisonnement par r\'ecurrence 
sur le rang de $\L$ en utilisant le th\'eor\`eme \ref{hahn banach} 
et la proposition \ref{fibres pi}.
\end{proof}

\bigskip

Si $\phb \in \FC(\L)$, nous noterons $\bar{\phb}$ sa classe 
dans $\FC(\L)/(\RM_{>0})^d$ et nous poserons 
$\Posbar(\bar{\phb})=\Pos(\phb)$. Comme corollaire de la proposition 
\ref{surjection pos}, nous obtenons une classification des ordres 
totaux sur $\L$ (compatibles avec la structure de groupe), 
ce qui est un r\'esultat classique \cite{robbiano}. 
En fait, se donner un ordre 
total sur $\L$ est \'equivalent \`a se donner une partie 
positive $X$ de $\L$ telle que $X \cap (-X)=0$. Notons 
$\FC_0(\L)$ l'ensemble des \'el\'ements $(\ph_1,\dots,\ph_r) \in \FC(\L)$ 
tels que $\L \cap \Ker \ph_r = 0$. 

\bigskip

\begin{coro}\label{ordres totaux}
L'application d\'ecrite dans la proposition \ref{surjection pos} induit 
une bijection entre l'ensemble $\FC_0(\L)/(\RM_{> 0})^d$ et l'ensemble 
des ordres totaux sur $\L$ compatibles avec la structure 
de groupe. 
\end{coro}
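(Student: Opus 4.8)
Le plan est de d\'eduire le corollaire de la proposition \ref{surjection pos}, qui fournit d\'ej\`a une bijection $\Posbar : \FC(\L)/(\RM_{>0})^d \stackrel{\sim}{\longto} \positif(\L)$, en rep\'erant quelles parties positives correspondent aux suites de $\FC_0(\L)$. Comme rappel\'e juste avant l'\'enonc\'e, se donner un ordre total sur $\L$ compatible avec l'addition \'equivaut \`a se donner une partie positive $X$ v\'erifiant $X \cap (-X) = 0$ (\`a un tel $X$ on associe l'ordre $\l \le \mu \iff \mu - \l \in X$, l'antisym\'etrie r\'esultant pr\'ecis\'ement de $X \cap (-X) = 0$, et r\'eciproquement $X = \{\l \in \L~|~\l \ge 0\}$). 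Il suffit donc de montrer que la bijection $\Posbar$ envoie $\FC_0(\L)/(\RM_{>0})^d$ exactement sur $\{X \in \positif(\L)~|~X \cap (-X) = 0\}$.

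Le c\oe ur de la preuve est le calcul du sous-groupe $\Pos(\phb) \cap (-\Pos(\phb))$ pour $\phb = (\ph_1,\dots,\ph_r) \in \FC(\L)$. En munissant $\RM^r$ de l'ordre lexicographique, $\Phi = (\ph_1,\dots,\ph_r)$ est un morphisme $\L \to \RM^r$, et la formule \ref{noyau pos} appliqu\'ee \`a $\Phi$ donne imm\'ediatement
$$\Pos(\phb) \cap (-\Pos(\phb)) = \Ker \Phi = \{\l \in \L~|~\ph_1(\l) = \cdots = \ph_r(\l) = 0\}.$$
Je montrerais ensuite, par r\'ecurrence sur $j$, que ce dernier ensemble co\"incide avec $\L \cap \Ker \ph_j$ pour $j = r$. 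Le point \`a surveiller est l'\'etape de r\'ecurrence : $\Ker \ph_j$ est d\'efini dans $\RM \otimes_\ZM (\L \cap \Ker \ph_{j-1})$, et il faut savoir que $\L \cap \Ker \ph_{j-1}$ est satur\'e dans $\L$ (\cad que $\L \cap (\RM \otimes_\ZM (\L \cap \Ker \ph_{j-1})) = \L \cap \Ker \ph_{j-1}$) pour pouvoir \'evaluer $\ph_j$ sur tout \'el\'ement de $\L$ annulant $\ph_1,\dots,\ph_{j-1}$. Or le noyau d'une application lin\'eaire est toujours satur\'e (si $k\l$ annule les $\ph_i$, il en va de m\^eme de $\l$), ce qui clot la r\'ecurrence et donne $\Pos(\phb) \cap (-\Pos(\phb)) = \L \cap \Ker \ph_r$.

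Il en r\'esulte que $\Pos(\phb) \cap (-\Pos(\phb)) = 0$ si et seulement si $\L \cap \Ker \ph_r = 0$, \cad si et seulement si $\phb \in \FC_0(\L)$ (le cas de la suite vide, pour lequel $\Pos(\vide) = \L$ et $\L \cap (-\L) = \L$, \'etant ainsi \'ecart\'e d\`es que $\L \neq 0$, conform\'ement \`a la convention $\ph_0 = 0$). Comme l'action de $(\RM_{>0})^d$ ne modifie aucun des $\Ker \ph_i$, la condition $\phb \in \FC_0(\L)$ ne d\'epend que de la classe $\bar{\phb}$ ; la partie $\FC_0(\L)$ est donc stable sous l'action, et la restriction de la bijection de la proposition \ref{surjection pos} fournit la bijection voulue. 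L'unique pas non formel est le calcul de noyau ci-dessus, dont le seul ingr\'edient d\'elicat est la saturation des noyaux ; tout le reste se d\'eduit m\'ecaniquement de la proposition \ref{surjection pos}.
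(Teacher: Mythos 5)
Your proof is correct and takes essentially the same route as the paper, which states the corollary as an immediate consequence of Proposition \ref{surjection pos} combined with the observation, made just before the statement, that total orders compatible with the group structure correspond exactly to positive parts $X$ with $X \cap (-X) = 0$. Your computation $\Pos(\phb) \cap (-\Pos(\phb)) = \L \cap \Ker \ph_r$ (via \ref{noyau pos} and the saturation of the iterated kernels), together with the stability of $\FC_0(\L)$ under the $(\RM_{>0})^d$-action, simply makes explicit the verification the paper leaves to the reader.
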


\bigskip

\subsection{Fonctorialit\'e} 
Soit $\s : \L' \to \L$ un morphisme de groupes ab\'eliens. 
Le r\'esultat suivant est facile~:

\bigskip

\begin{lem}\label{sigma}
Si $X$ est une partie positive de $\L$, 
alors $\s^{-1}(X)$ est une partie positive de $\L'$.
\end{lem}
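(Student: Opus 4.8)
The plan is to verify directly that $\s^{-1}(X)$ satisfies the three defining conditions (P1), (P2), (P3) of a positive part of $\L'$. The single observation that renders everything routine is that taking preimages under a \emph{group} homomorphism commutes with passing to the opposite: since $\s(-\l')=-\s(\l')$, we have $\l' \in \s^{-1}(-X)$ if and only if $-\l' \in \s^{-1}(X)$, that is
$$\s^{-1}(-X) = -\s^{-1}(X).$$
I would establish this identity first, and then each of (P1)--(P3) follows by applying $\s^{-1}$ to the corresponding property of $X$.

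For (P1), I would take preimages in the equality $\L = X \cup (-X)$. Since preimages commute with unions, $\s^{-1}(\L) = \s^{-1}(X) \cup \s^{-1}(-X)$; the left-hand side is all of $\L'$, and by the identity above the right-hand side equals $\s^{-1}(X) \cup (-\s^{-1}(X))$. For (P2), I would take $\l', \mu' \in \s^{-1}(X)$, so that $\s(\l'), \s(\mu') \in X$; then $\s(\l'+\mu') = \s(\l')+\s(\mu') \in X+X \subseteq X$ by (P2) for $X$, whence $\l'+\mu' \in \s^{-1}(X)$.

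For (P3), I would combine the identity with the fact that preimages commute with intersections to obtain
$$\s^{-1}(X) \cap (-\s^{-1}(X)) = \s^{-1}(X) \cap \s^{-1}(-X) = \s^{-1}(X \cap (-X)),$$
and then invoke that the preimage of a subgroup under a group homomorphism is again a subgroup: since $X \cap (-X)$ is a subgroup of $\L$ by (P3) for $X$, its preimage $\s^{-1}(X \cap (-X))$ is a subgroup of $\L'$.

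There is no genuine obstacle here, which is why the author records the statement as ``facile''. The only two points that actually use the homomorphism hypothesis, rather than mere set-theoretic preimage, are the commutation $\s^{-1}(-X) = -\s^{-1}(X)$ and the stability of subgroups under preimage; both would fail for an arbitrary map of sets. Everything else is the formal interaction of preimages with unions and intersections.
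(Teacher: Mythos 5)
Your proof is correct and follows essentially the same route as the paper's: verifying (P1) by taking preimages of the union, (P2) by a direct computation with the homomorphism, and (P3) via $\s^{-1}(X)\cap(-\s^{-1}(X))=\s^{-1}(X\cap(-X))$. The only difference is cosmetic: you isolate the identity $\s^{-1}(-X)=-\s^{-1}(X)$ explicitly, which the paper uses tacitly in steps (1) and (3).
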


\begin{proof}
Soit $X \in \positif(\L)$. Posons $X' = \s^{-1}(X)$. 
Montrons que $X' \in \positif(\L')$.

\medskip

(1) On a $X' \cup (-X') = \s^{-1}(X \cup (-X))=\s^{-1}(\L)=\L'$.

\smallskip

(2) Si $\l'$ et $\mu'$ sont deux \'el\'ements de $X'$, alors 
$\s(\l')$ et $\s(\mu')$ appartiennent \`a $X$. Donc $\s(\l')+\s(\mu') \in X$. 
En d'autres termes, $\l' + \mu' \in X'$. Donc $X'+X' \subseteq X'$. 

\smallskip

(3) On a $X' \cap (-X') = \s^{-1}(X \cap (-X))$, donc l'ensemble 
$X' \cap (-X')$ est un sous-groupe de $\L'$.
\end{proof}

\bigskip

\begin{coro}\label{pos inclusion}
Si $\L'$ est un sous-groupe de $\L$ et si $X$ est une partie 
positive de $\L$, alors $X \cap \L'$ est une partie positive de $\L'$. 
\end{coro}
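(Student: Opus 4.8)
The plan is to obtain this as an immediate special case of Lemma \ref{sigma}. I would take $\s : \L' \to \L$ to be the inclusion morphism of the subgroup $\L'$ into $\L$, which is of course a morphism of abelian groups. For this particular $\s$, the preimage of any subset of $\L$ is simply its trace on $\L'$; concretely, $\s^{-1}(X) = \{\l' \in \L' \mid \l' \in X\} = X \cap \L'$. Since $X$ is assumed to be a positive part of $\L$, Lemma \ref{sigma} asserts that $\s^{-1}(X) \in \positif(\L')$, and substituting $\s^{-1}(X) = X \cap \L'$ yields exactly the claim that $X \cap \L'$ is a positive part of $\L'$.

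As an alternative, one could instead verify the three defining conditions (P1), (P2) and (P3) directly for $X \cap \L'$ inside $\L'$. This would, however, merely reproduce the three-step argument already carried out in the proof of Lemma \ref{sigma} specialized to the inclusion map, so invoking the lemma is the cleaner route and avoids redundancy.

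There is no genuine obstacle here: the only point worth making explicit is the identification $\s^{-1}(X) = X \cap \L'$ for the inclusion, after which the corollary is a one-line consequence of the functoriality established in Lemma \ref{sigma}.
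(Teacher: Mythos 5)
Your proof is correct and is exactly the argument the paper intends: the corollary is stated without proof precisely because it follows from Lemme \ref{sigma} applied to the inclusion $\s : \L' \hookrightarrow \L$, for which $\s^{-1}(X) = X \cap \L'$. Nothing more is needed.
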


\bigskip

Si $\s : \L' \to \L$ est un morphisme de groupes ab\'eliens, nous noterons 
$\s^* : \positif(\L) \to \positif(\L')$, $X \mapsto \s^{-1}(X)$ induite 
par le lemme \ref{sigma}. Si $\t : \L'' \to \L'$ 
est un morphisme de groupe ab\'eliens, il est alors facile 
de v\'erifier que 
\equat\label{composition}
(\t \circ \s)^* = \s^* \circ \t^*.
\endequat

D'autre part, si on note $V' = \RM \otimes_\ZM \L'$, alors 
$\s$ induit une application $\RM$-lin\'eaire 
$\s_\RM : V' \longto V$ dont nous noterons 
$\lexp{t}{\s_\RM} : V^* \longto V^{\prime *}$ l'application duale 
et $\lexp{t}{\sigba}_\RM : V^*/\RM_{>0} \longto V^{\prime *}/\RM_{> 0}$ 
l'application (continue) induite. 
Il est alors facile de v\'erifier que le diagramme
\equat\label{diag sigma}
\diagram
V^*/\RM_{>0} \rrto^{\DS{\Posbar}} \ddto^{\DS{\lexp{t}{\sigba}_\RM}} && 
\positif(\L) \rrto^{\DS{\pi}} \ddto^{\DS{\s^*}} && 
V^*/\RM_{>0} \ddto^{\DS{\lexp{t}{\sigba}_\RM}} \\
&&&&\\
V^{\prime *}/\RM_{>0} \rrto^{\DS{\Posbar'}} && 
\positif(\L') \rrto^{\DS{\pi'}} && 
V^{\prime *}/\RM_{>0}
\enddiagram
\endequat
est commutatif (o\`u $\Posbar'$ et $\pi'$ sont les analogues de $\Posbar$ et $\pi$ 
pour le r\'eseau $\L'$).

\bigskip

\section{Topologie sur $\positif(\L)$}

\medskip

Dans cette section, nous allons d\'efinir sur $\positif(\L)$ une topologie 
et en \'etudier les propri\'et\'es. Nous allons notamment montrer que la 
plupart des applications introduites dans la section pr\'ec\'edente 
($\Posbar$, $\pi$, $i_{\phba}$,...) sont continues. 

\bigskip

\subsection{D\'efinition}
Si $E$ est une partie de $\L$, 
nous poserons 
$$\UC(E)=\{X \in \positif(\L)~|~X \cap E=\vide\}.$$
Si $\l_1$,\dots, $\l_n$ sont des \'el\'ements de $\L$, nous 
noterons pour simplifier $\UC(\l_1,\dots,\l_n)$ l'ensemble 
$\UC(\{\l_1,\dots,\l_n\})$. Si cela est n\'ecessaire, nous noterons ces ensembles 
$\UC_\L(E)$ ou $\UC_\L(\l_1,\dots,\l_n)$. On a alors
\equat\label{u inter}
\UC(E)=\bigcap_{\l \in E} \UC(\l).
\endequat
Notons que 
\equat\label{u vide}
\UC(\vide)=\positif(\L)\qquad\text{et}\qquad \UC(\L)=\{\vide\}.
\endequat
D'autre part, si $(E_i)_{i \in I}$ est une famille de parties de $\L$, alors 
\equat\label{u intersection}
\bigcap_{i \in I} \UC(E_i) = \UC\bigl(\bigcup_{i \in I} E_i\bigr).
\endequat
Une partie $\UC$ de $\positif(\L)$ sera dite {\it ouverte} si, 
pour tout $X \in \UC$, il existe une partie {\bf finie} $E$ de $\L$ telle 
que $X \in \UC(E)$ et $\UC(E) \subset \UC$. L'\'egalit\'e 
\ref{u intersection} montre que cela d\'efinit bien une topologie 
sur $\positif(\L)$. 

\bigskip

\begin{prop}\label{connexite}
Si $\UC$ est un ouvert de $\positif(\L)$ contenant $\L$, alors 
$\UC=\positif(\L)$. En particulier, $\positif(\L)$ est connexe. 
Si $\L \neq 0$, alors il n'est pas s\'epar\'e. 
\end{prop}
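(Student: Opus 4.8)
The plan is to exploit the single observation that the only basic open set $\UC(E)$ containing $\L$ is the whole space. First I would note that for the positive part $X=\L$ one has $\L \cap E = E$ for every $E \subseteq \L$, so $\L \in \UC(E)$ holds if and only if $E = \vide$; and by \ref{u vide} one has $\UC(\vide)=\positif(\L)$. Now if $\UC$ is open and contains $\L$, the definition of the topology furnishes a (finite) subset $E$ with $\L \in \UC(E) \subseteq \UC$; the observation forces $E=\vide$, whence $\positif(\L)=\UC(\vide) \subseteq \UC$, and therefore $\UC=\positif(\L)$. This settles the first assertion, and note that the finiteness of $E$ plays no role.

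For connectedness I would argue by contradiction. Suppose $\positif(\L)=\UC_1 \dotcup \UC_2$ with $\UC_1,\UC_2$ disjoint, open and nonempty. The point $\L$ lies in one of them, say $\L \in \UC_1$; by the first assertion $\UC_1=\positif(\L)$, forcing $\UC_2=\vide$, a contradiction. Hence no such decomposition exists and $\positif(\L)$ is connected.

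For the last assertion, assume $\L \neq 0$. I would first produce a proper positive part: since $V=\RM \otimes_\ZM \L \neq 0$, there is a nonzero linear form $\ph$ on $V$, whose restriction to $\L$ is nonzero because $\L$ spans $V$; by \ref{positif exemple} the set $X=\Pos(\ph)$ is a positive part, and it is proper since any $\l$ with $\ph(\l)<0$ lies outside $X$, so $X \neq \L$. Now $X$ and $\L$ cannot be separated by disjoint open sets: by the first assertion every open neighbourhood of $\L$ equals $\positif(\L)$, hence contains $X$, so it meets every open neighbourhood of $X$ (at least in $X$ itself). Therefore $\positif(\L)$ is not Hausdorff.

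There is no genuine obstacle here; the whole proposition rests on the computation $\L \in \UC(E) \iff E=\vide$ together with $\UC(\vide)=\positif(\L)$. The only point requiring a little care is the existence of a proper positive part in the last assertion, which is precisely where the hypothesis $\L \neq 0$ enters, through the existence of a nonzero linear form on $V$.
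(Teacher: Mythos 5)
Your proof is correct and follows essentially the same route as the paper: the key computation that $\L \in \UC(E)$ forces $E=\vide$, hence every open set containing $\L$ is all of $\positif(\L)$, from which connectedness and non-separation follow immediately. The only difference is that you explicitly construct a proper positive part $\Pos(\ph)$ to witness that $\positif(\L)$ has a second point, a detail the paper's proof leaves implicit.
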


\bigskip

\begin{proof}
Par d\'efinition, il existe une partie finie $E$ de $\L \setminus \L$ 
telle que $\UC(E) \subseteq \UC$. Mais on a forc\'ement $E=\vide$, 
donc $\UC=\positif(\L)$ d'apr\`es \ref{u vide}. D'o\`u le 
r\'esultat. 

Le fait que $\positif(\L)$ 
n'est pas s\'epar\'e (lorsque $\L \neq 0$) en d\'ecoule~: 
le point $\L$ de $\positif(\L)$ ne peut \^etre 
s\'epar\'e d'aucun autre.
\end{proof}

\bigskip

\example{Z} 
L'espace topologique $\positif(\ZM)$ n'a que trois points~: 
$\ZM$, $\ZM_{\ge 0}$ et $\ZM_{\le 0}$. Sur ces trois points, 
seul $\ZM$ est un point ferm\'e et $\ZM_{\ge 0}$ et $\ZM_{\le 0}$ 
sont des points ouverts (en effet, $\{\ZM_{\ge 0}\} = \UC(-1)$ 
et $\{\ZM_{\le 0}\} = \UC(1)$).\finl

\bigskip

Il est clair que la topologie sur $\positif(\L)$ d\'efinie ci-dessus 
est la topologie induite par une topologie sur l'ensemble des parties 
de $\L$ (d\'efinie de fa\c{c}on analogue)~: cette derni\`ere est tr\`es 
grossi\`ere mais sa restriction \`a $\positif(\L)$ est plus int\'eressante. 

\medskip

Nous aurons besoin de la propri\'et\'e suivante des ensembles $\UC(E)$~:

\bigskip

\begin{lem}\label{uce vide}
Soit $E$ une partie {\bfit finie} de $\L$. Alors les assertions suivantes 
sont \'equivalentes~:
\begin{itemize}
\itemth{1} $\UC(E)=\vide$.

\itemth{2} Il existe $n \ge 1$, $\l_1$,\dots, $\l_n \in E$ et 
$r_1$,\dots, $r_n \in \ZM_{>0}$ tels que $\DS{\sum_{i=1}^n r_i\l_i=0}$. 

\itemth{3} Il n'existe pas de forme lin\'eaire $\ph$ sur $V^*$ 
telle que $\ph(E) \subset \RM_{>0}$.
\end{itemize}
\end{lem}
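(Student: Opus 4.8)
Le plan est d'établir le cycle d'implications $(2) \Imp (1) \Imp (3) \Imp (2)$, les deux premières étant directes et la dernière reposant sur la convexité. Pour $(2) \Imp (1)$, je raisonnerais par l'absurde: supposons $\sum_{i=1}^n r_i \l_i = 0$ avec les $r_i \in \ZM_{>0}$ et les $\l_i \in E$, et supposons qu'il existe $X \in \positif(\L)$ tel que $X \cap E = \vide$. Alors aucun $\l_i$ n'appartient à $X$, donc la propriété (P1) donne $-\l_i \in X$, c'est-à-dire $\mu_i := -\l_i \in X^+$ pour tout $i$. En appliquant la propriété (P2) de façon répétée, $r_2 \mu_2 + \cdots + r_n \mu_n \in X$ (cette somme valant $0$ si $n=1$); comme $r_1 \mu_1 = -(r_2 \mu_2 + \cdots + r_n \mu_n)$ est alors à la fois dans $X$ et dans $-X$, le lemme \ref{proprietes positives}~(c) donne $\mu_1 \in X \cap (-X)$, contredisant $\mu_1 \in X^+ = X \setminus (-X)$. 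Pour $(1) \Imp (3)$, je prouverais la contraposée: si une forme linéaire $\ph$ sur $V$ vérifie $\ph(E) \subset \RM_{>0}$, alors $\Pos(-\ph)$ est une partie positive (d'après \ref{positif exemple}) qui ne rencontre pas $E$, puisque $\ph(\l) > 0$ entraîne $-\ph(\l) < 0$ pour tout $\l \in E$; donc $\Pos(-\ph) \in \UC(E)$ et $\UC(E) \neq \vide$.

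Pour $(3) \Imp (2)$, je prouverais la contraposée $\neg(2) \Imp \neg(3)$. Notons $\CC$ l'enveloppe convexe de $E$. Je montrerais d'abord que $\neg(2)$ équivaut à $0 \notin \CC$: si $0 \in \CC$, il existe une écriture $0 = \sum_{i \in S} t_i \l_i$ avec $S \neq \vide$, les $\l_i$ distincts dans $E$ et les $t_i \in \RM_{>0}$ (en ne gardant que les coefficients non nuls d'une combinaison convexe), et le lemme \ref{solutions} fournit alors des $r_i \in \ZM_{>0}$ avec $\sum_{i \in S} r_i \l_i = 0$, soit $(2)$; réciproquement $(2)$ donne trivialement $0 \in \CC$ après normalisation des coefficients. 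Supposant donc $0 \notin \CC$, et comme $E$ est fini, $\CC$ est un convexe compact ne contenant pas $0$; le théorème de séparation de Hahn-Banach fournit une forme linéaire $\ph$ sur $V$ et un réel $\b > 0$ tels que $\ph(x) \ge \b$ pour tout $x \in \CC$. En particulier $\ph(\l) \ge \b > 0$ pour tout $\l \in E$, d'où $\ph(E) \subset \RM_{>0}$, ce qui est $\neg(3)$.

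La difficulté principale réside dans l'implication $(3) \Imp (2)$: tout le travail consiste à traduire $\neg(2)$ en l'énoncé géométrique $0 \notin \CC$ — c'est précisément là qu'intervient le passage du réel à l'entier assuré par le lemme \ref{solutions} — puis à appliquer la version stricte du théorème de séparation, qui exige la compacité de $\CC$ (garantie par la finitude de $E$) pour obtenir l'inégalité stricte $\ph(E) \subset \RM_{>0}$ et non seulement $\ph(E) \subset \RM_{\ge 0}$. Les implications $(2) \Imp (1)$ et $(1) \Imp (3)$ ne font appel qu'aux axiomes des parties positives et au lemme \ref{proprietes positives}.
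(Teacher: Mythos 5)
Votre d\'emonstration est correcte, mais elle suit une route v\'eritablement diff\'erente de celle de l'article sur le point cl\'e, \`a savoir l'implication $(3) \Imp (2)$. Les implications $(2) \Imp (1)$ et $(1) \Imp (3)$ sont trait\'ees exactement comme dans l'article (m\^eme usage du lemme \ref{proprietes positives} (a) et (c), m\^eme recours \`a $\Pos(-\ph)$). Pour la contrapos\'ee de $(3) \Imp (2)$, l'article consid\`ere le c\^one convexe $\CC$ des combinaisons \`a coefficients {\it strictement positifs} d'\'el\'ements de $E$~: ce c\^one n'\'etant ni ferm\'e ni born\'e, le th\'eor\`eme de Hahn-Banach n'y fournit qu'une s\'eparation {\it large}, c'est-\`a-dire une forme lin\'eaire non nulle $\ph$ telle que $\ph(\CC) \subseteq \RM_{\ge 0}$~; pour obtenir la positivit\'e stricte sur $E$ tout entier, l'article raisonne alors par r\'ecurrence sur le rang de $\L$, applique l'hypoth\`ese de r\'ecurrence \`a $E \cap \Ker \ph$ et perturbe $\ph$ en $\ph + \e \psit$, o\`u $\psit$ \'etend une forme convenable d\'efinie sur $\RM \otimes_\ZM (\Ker \ph \cap \L)$. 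Vous \'evitez enti\`erement cette r\'ecurrence et cette perturbation en travaillant avec l'enveloppe convexe de $E$~: celle-ci est {\it compacte} puisque $E$ est fini, ce qui autorise la s\'eparation {\it stricte} (convexe compact contre le point $0$) et donne directement $\ph(E) \subset \RM_{>0}$. La traduction de $\neg(2)$ en $0 \notin \CC$ via le lemme \ref{solutions} est en revanche identique dans les deux preuves. Votre argument est donc plus court et plus direct~; celui de l'article a pour lui d'\^etre uniforme avec la preuve du th\'eor\`eme \ref{hahn banach}, o\`u le m\^eme sch\'ema (partie convexe non compacte, s\'eparation large, puis correction) est in\'evitable car l'enveloppe convexe de $X^+$ n'y est pas compacte~-- c'est pr\'ecis\'ement la finitude de $E$, que vous exploitez, qui rend ici la voie directe possible.
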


\bigskip

\begin{proof}
S'il existe une forme lin\'eaire $\ph$ sur $V^*$ 
telle que $\ph(E) \subseteq \RM_{>0}$, alors $\Pos(-\ph) \in \UC(E)$, et 
donc $\UC(E) \neq \vide$. Donc (1) $\imp$ (3). 

\medskip

Supposons trouv\'es $\l_1$,\dots, $\l_n \in E$ et $r_1$,\dots, $r_n \in \ZM_{>0}$ 
tels que $r_1\l_1 + \cdots + r_n \l_n = 0$. Alors, si $X \in \UC(E)$, 
on a $-\l_2$,\dots, $-\l_n \in X$. Mais 
$r_1\l_1 = -r_2\l_2 - \cdots - r_n \l_n \in X$, 
donc $\l_1 \in X$, ce qui contredit l'hypoth\`ese. Donc (2) $\imp$ (1).

\medskip

Il nous reste \`a montrer que (3) $\imp$ (2). Supposons que (2) n'est pas vraie. 
Nous allons montrer qu'alors (3) n'est pas vraie en raisonnant par r\'ecurrence sur
la dimension de $V$ (c'est-\`a-dire le rang de $\L$). 
Posons 
$$\CC=\{t_1\l_1+\cdots t_n \l_n~|~n \ge 1,
~\l_1,\dots, \l_n \in \L,~t_1,\dots,t_n \in \RM_{>0}\}.$$
Alors $\CC$ est une partie convexe de $V$ contenant $E$ et, d'apr\`es 
le lemme \ref{solutions} et le fait que (2) ne soit pas vraie, on a 
$0 \not\in \CC$. Par cons\'equent, il r\'esulte du th\'eor\`eme de Hahn-Banach 
qu'il existe une forme lin\'eaire non nulle $\ph$ telle que 
$\ph(\CC)\subseteq \RM_{\ge 0}$. Posons $\L'=(\Ker \ph)\cap \L$ et 
$E' = E \cap \L$. Alors l'assertion (2) pour $E'$ n'est pas vraie elle aussi donc, 
par hypoth\`ese de r\'ecurrence, il existe un forme lin\'eaire $\psi$ sur 
$V'=\RM \otimes_\ZM \L' \subseteq V$ telle que $\psi(E') \subset \RM_{>0}$. 
Soit $\psit$ une extension de $\psi$ \`a $V$. 
Puisque $\ph(E \setminus E') \subset \RM_{>0}$, il existe $\e > 0$ tel que 
$\ph(\l) + \e \psit(\l) > 0$ pour tout $\l \in E\setminus E'$. 
Mais on a aussi, si $\l \in E'$, $\ph(\l)+\e\psit(\l) = \e \psit(\l) > 0$. 
Donc $(\ph + \e \psit)(E) \subset \RM_{>0}$.
\end{proof}

\bigskip

Si $\EC$ est une partie de $\positif(\L)$, nous noterons 
$\overline{\EC}$ son adh\'erence dans $\positif(\L)$.

\bigskip

\begin{coro}\label{adherence u}
Soit $E$ une partie {\bfit finie} de $\L$ telle que $\UC(E) \neq \vide$. Alors
$$\overline{\UC(E)} = \positif(\L) \setminus \Bigl(\bigcup_{\l \in E} \UC(-\l)\Bigr).$$
\end{coro}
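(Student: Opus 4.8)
The plan is to read off membership in $\overline{\UC(E)}$ directly from the definition of the topology and then identify it with the right-hand side, which is the set $A = \{X \in \positif(\L) \mid -\l \in X \text{ for all } \l \in E\}$. Since each $\UC(D)$ is open and $\UC(D) \cap \UC(D') = \UC(D \cup D')$ by \ref{u intersection}, the sets $\UC(D)$ with $D \subseteq \L$ finite form a basis of the topology; hence a positive part $X$ lies in $\overline{\UC(E)}$ if and only if every basic open set containing $X$ meets $\UC(E)$. As $X \in \UC(D)$ means precisely $D \cap X = \vide$, and $\UC(D) \cap \UC(E) = \UC(D \cup E)$, the first step is the equivalence
$$X \in \overline{\UC(E)} \iff \text{for every finite } D \subseteq \L \text{ with } D \cap X = \vide,\ \UC(D \cup E) \neq \vide.$$

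For $\overline{\UC(E)} \subseteq A$ I argue by contraposition. If $X \notin A$, choose $\l_0 \in E$ with $-\l_0 \notin X$ and put $D = \{-\l_0\}$, so that $D \cap X = \vide$. Then $\UC(D \cup E) = \vide$: any $Y$ in it satisfies $\l_0 \notin Y$ (as $\l_0 \in E$), hence $-\l_0 \in Y$ by (P1), while also $-\l_0 \notin Y$ (as $-\l_0 \in D$) --- a contradiction. By the equivalence above this forces $X \notin \overline{\UC(E)}$.

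The substantive inclusion is $A \subseteq \overline{\UC(E)}$. Fix $X \in A$ and a finite $D$ with $D \cap X = \vide$; I must prove $\UC(D \cup E) \neq \vide$. This is exactly where Lemma \ref{uce vide} is invoked: it is enough to rule out a relation $\sum_{i=1}^n r_i \nu_i = 0$ with $n \ge 1$, $\nu_i \in D \cup E$ and $r_i \in \ZM_{>0}$. Assume one exists. Every $-\nu_i$ lies in $X$: if $\nu_i \in E$ this is the definition of $A$, and if $\nu_i \in D$ then $\nu_i \notin X$, so $-\nu_i \in X$ by (P1). If all $\nu_i$ lay in $E$, then Lemma \ref{uce vide} applied to $E$ would give $\UC(E) = \vide$, contradicting the hypothesis of the corollary; hence some $\nu_j \in D$, and in particular $\nu_j \notin X$. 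Writing $r_j \nu_j = \sum_{i \neq j} r_i(-\nu_i)$ and using $X + X \subseteq X$ (P2) --- with $0 \in X$ (Lemma \ref{proprietes positives}(b)) covering the case of an empty sum --- gives $r_j \nu_j \in X$, whence $\nu_j \in X$ by Lemma \ref{proprietes positives}(c). This contradicts $\nu_j \notin X$, so no such relation exists and $\UC(D \cup E) \neq \vide$.

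I expect the sufficiency direction to be the main obstacle, its crux being the appeal to Lemma \ref{uce vide} that converts the topological condition $\UC(D \cup E) \neq \vide$ into the arithmetic non-existence of a positive-integer relation; the remaining manipulation rests only on (P2) and Lemma \ref{proprietes positives}(c), with the standing hypothesis $\UC(E) \neq \vide$ precisely eliminating the degenerate case where such a relation is supported entirely on $E$.
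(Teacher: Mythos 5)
Your proof is correct and takes essentially the same route as the paper's: both reduce the substantive inclusion to Lemma \ref{uce vide}, converting emptiness of $\UC(E\cup D)$ (the paper's $\UC(E\cup F)$) into a positive-integer relation, refuting it with (P1), (P2) and Lemma \ref{proprietes positives}(c), and using the hypothesis $\UC(E)\neq\vide$ to exclude relations supported entirely on $E$. The only cosmetic difference is that you prove the inclusion $A\subseteq\overline{\UC(E)}$ directly via the basis of open sets $\UC(D)$, whereas the paper argues the same content by contraposition.
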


\bigskip

\begin{proof}
Posons 
$$\OC=\bigcup_{\l \in E} \UC(-\l)$$
$$\FC=\positif(\L) \setminus \OC. \leqno{\text{et}}$$
Alors $\FC$ est ferm\'e dans $\positif(\L)$ et contient $\UC(E)$. 
Donc $\overline{\UC(E)} \subseteq \FC$. 

R\'eciproquement, soit $X \in \positif(\L) \setminus \overline{\UC(E)}$. 
Nous devons montrer que 
$$X \in \OC.\leqno{(?)}$$
Puisque $\positif(\L) \setminus \overline{\UC(E)}$ est un ouvert, il existe 
une partie finie $F$ de $\L$ telle que $X \in \UC(F)$ et 
$\UC(F) \subseteq \OC$. En particulier, $\UC(F) \cap \UC(E) = \vide$. 
En d'autres termes, d'apr\`es \ref{u intersection}, on a 
$\UC(E \cup F) = \vide$. 
Donc, d'apr\`es le lemme \ref{uce vide}, il existe $m \ge 0$, $n \ge 0$, 
$\l_1$,\dots, $\l_m \in E$, $\mu_1$,\dots, $\mu_n \in F$, 
$r_1$,\dots, $r_m$, $s_1$,\dots, $s_n \in \ZM_{>0}$ tels que
$$r_1\l_1 + \cdots + r_m \l_m + s_1 \mu_1 + \cdots + s_n \mu_n=0$$
et $m+n \ge 1$. En fait, comme $\UC(E)$ et $\UC(F)$ sont toutes deux non vides, 
il d\'ecoule du lemme \ref{uce vide} que $m$, $n \ge 1$. 

Si $X \not\in \OC$, alors $-\l_i \in X$ pour tout $i$, ce qui implique 
que $s_1 \mu_1 + \cdots + s_n \mu_n \in X$. Cela ne peut se produire que si au moins 
l'un des $\mu_j$ appartient \`a $X$, mais c'est impossible car $F \cap X =\vide$. 
D'o\`u (?).
\end{proof}

\bigskip

\example{viditude} 
Le corollaire \ref{adherence u} n'est pas forc\'ement vrai si $\UC(E)=\vide$. 
En effet, si $\l \in \L \setminus\{0\}$, alors $\UC(\l,-\l)=\vide$ mais, 
du moins lorsque $\dim V \ge 2$, on a $\UC(\l) \cup \UC(-\l) \neq \positif(\L)$.\finl

\bigskip

\subsection{Fonctorialit\'e} 
Si $\L'$ est un autre r\'eseau et si $\s : \L' \to \L$ est un 
morphisme de groupes et si $E'$ est une partie de $\L'$, alors 
\equat\label{sigma continue}
(\s^*)^{-1}\bigl(\UC_{\L'}(E')\bigr)=\UC_\L\bigl(\s(E')\bigr).
\endequat
\begin{proof}[Preuve de \ref{sigma continue}] 
Soit $X$ une partie positive de $\L$. Alors 
$X \in (\s^*)^{-1}\bigl(\UC_{\L'}(E')\bigr)$ 
(resp. $X \in \UC_\L\bigl(\s(E')\bigr)$) 
si et seulement si $\s^{-1}(X) \cap E' = \vide$ 
(resp. $X \cap \s(E') = \vide$). 
Il est alors facile de v\'erifier que ces deux derni\`eres conditions sont 
\'equivalentes.
\end{proof}

Cela implique le r\'esultat suivant~:

\bigskip

\begin{prop}\label{continu}
L'application $\s^* : \positif(\L) \to \positif(\L')$ est continue. 
\end{prop}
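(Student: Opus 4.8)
The plan is to exploit the fact that the sets $\UC(E)$, for $E$ a \emph{finite} subset of $\L$, form a basis of the topology on $\positif(\L)$. Indeed, each such $\UC(E)$ is itself open: given $X \in \UC(E)$, the finite set $E$ itself witnesses $X \in \UC(E) \subseteq \UC(E)$, so the defining condition for openness is satisfied. Conversely, the very definition of an open set says precisely that it is a union of sets of this form. Consequently, to prove that $\s^*$ is continuous it suffices to check that the preimage $(\s^*)^{-1}\bigl(\UC_{\L'}(E')\bigr)$ is open in $\positif(\L)$ for every finite subset $E'$ of $\L'$.

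By the identity \ref{sigma continue} already established, we have
$$(\s^*)^{-1}\bigl(\UC_{\L'}(E')\bigr) = \UC_\L\bigl(\s(E')\bigr).$$
Since $E'$ is finite, its image $\s(E')$ is a finite subset of $\L$, so $\UC_\L\bigl(\s(E')\bigr)$ is one of the basic open sets described above, and is therefore open. This gives the continuity of $\s^*$ at once.

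There is essentially no obstacle here, as all the substantive work was carried out in establishing \ref{sigma continue}. The only point worth keeping in mind is that the finiteness of $E'$ is preserved by $\s$, which is immediate since the image of a finite set under any map is finite; this is exactly what guarantees that $\UC_\L\bigl(\s(E')\bigr)$ remains a legitimate basic open set, and not merely one of the general sets $\UC(E)$ with $E$ possibly infinite, for which openness is not automatic.
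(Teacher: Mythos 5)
Your proof is correct and matches the paper's approach: the paper likewise deduces the proposition directly from the identity \ref{sigma continue}, treating it as immediate. You merely make explicit the two routine points the paper leaves implicit — that the sets $\UC(E)$ with $E$ finite form a basis of the topology, and that $\s(E')$ is again finite — both of which are accurate and harmless to spell out.
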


\bigskip

%

\subsection{Continuit\'e} 
D'apr\`es la section \ref{section positive}, nous avons \'equip\'e 
l'espace topologique $\positif(\L)$ de 
deux applications $\Posbar : V^*/\RM_{> 0} \to \positif(\L)$ et 
$\pi : \positif(\L) \to V^*/\RM_{>0}$ telles que 
$\pi \circ \Posbar = \Id_{V^*/\RM_{>0}}$. Nous montrerons dans le 
th\'eor\`eme \ref{pi pos continues} que ces applications 
sont continues (lorsque $V^*/\RM_{>0}$ est bien s\^ur muni de 
la topologie quotient) et nous en d\'eduirons quelques autres 
propri\'et\'es topologiques de ces applications. 
Avant cela, introduisons la notation suivante~: si $E$ 
est une partie finie de $\L$, on pose 
$$\VC(E)=\{\phba\in V^*/\RM_{> 0}~|~\forall~\l \in E,~\ph(\l) < 0\}.$$
Si cela est n\'ecessaire, nous noterons $\VC_\L(E)$ l'ensemble $\VC(E)$. 
Alors
$$p^{-1}(\VC(E))=\{\ph \in V^*~|~\forall~\l \in E,~\ph(\l) < 0\}.$$
Donc $p^{-1}(\VC(E))$ est ouvert, donc $\VC(E)$ est ouvert 
dans $V^*/\RM_{>0}$ par d\'efinition de la topologie quotient. 

\bigskip

\begin{prop}\label{pi pos continues}
Les applications $\Posbar$ et $\pi$ sont continues. De plus~:
\begin{itemize}
\itemth{a} $\Posbar$ induit un hom\'eomorphisme sur son image.

\itemth{b} L'image de $\Posbar$ est dense dans $\positif(\L)$.
%
\end{itemize}
\end{prop}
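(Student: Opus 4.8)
The plan is to prove the two continuity statements first and then read off (a) and (b) from them, the continuity of $\pi$ being by far the hardest point. \emph{Continuity of $\Posbar$:} since the sets $\UC(E)$ with $E$ finite form a basis of $\positif(\L)$, it is enough to identify their preimages. For a linear form $\ph$ one has $\bar\ph\in\Posbar^{-1}(\UC(E))$ iff $\Pos(\ph)\cap E=\vide$, i.e. iff $\ph(\l)<0$ for every $\l\in E$, that is iff $\bar\ph\in\VC(E)$. Hence $\Posbar^{-1}(\UC(E))=\VC(E)$, which is open, so $\Posbar$ is continuous.

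\emph{Continuity of $\pi$:} I would show that $\pi^{-1}(W)$ is a neighbourhood of each of its points, for every open $W\subseteq V^*/\RM_{>0}$. Fix $Y_0$ with $\pi(Y_0)=\bar{\ph_0}$ and put $\Omega=p^{-1}(W)$, an open $\RM_{>0}$-stable cone. If $\bar 0\in W$ then $\Omega=V^*$ and $\pi^{-1}(W)=\positif(\L)$; so assume $0\notin\Omega$, whence $\ph_0\neq 0$ and $Y_0\neq\L$. The heart of the matter is to produce a finite, nonempty $F\subseteq\L$ with $\ph_0(\l)<0$ for all $\l\in F$ and $\{\ph\in V^*:\ph(\l)\le 0\text{ for all }\l\in F\}\setminus\{0\}\subseteq\Omega$. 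Granting such an $F$: first $Y_0\in\UC(F)$, because $\ph_0(\l)<0$ together with $Y_0\subseteq\Pos(\ph_0)$ forces $\l\notin Y_0$; and for any $Y\in\UC(F)$ one has $Y\neq\L$ (as $F\neq\vide$), so $\ph_Y$ is defined by Theorem \ref{hahn banach}, and since $\Pos^+(\ph_Y)\subseteq Y$ by Lemma \ref{equivalent separant}, the relation $\l\notin Y$ gives $\ph_Y(\l)\le 0$ for each $\l\in F$. Thus $\ph_Y$ lies in the above cone and is nonzero, so $\pi(Y)=\bar{\ph_Y}\in W$; that is, $\UC(F)\subseteq\pi^{-1}(W)$.

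The construction of $F$ is the main obstacle, and it is exactly where the gap between the \emph{non-strict} information $\l\notin Y$ (which only yields $\ph_Y(\l)\le 0$) and the \emph{strict} inequalities cutting out $W$ must be bridged. I would close it by compactness. For each $\psi$ on the unit sphere of $V^*$ with $\psi\notin\Omega$, the form $\psi$ is not a positive multiple of $\ph_0$, so the open cone $\{v\in V:\psi(v)>0,\ \ph_0(v)<0\}$ is nonempty and therefore contains a point $\l_\psi\in\L$. The open sets $\{\psi':\psi'(\l_\psi)>0\}$ cover the compact set $S'=\{\,\psi:\|\psi\|=1\,\}\setminus\Omega$; a finite subcover yields $\l_{\psi_1},\dots,\l_{\psi_n}$ with $\ph_0<0$ on each of them and with $\{\ph:\ph(\l_{\psi_j})\le 0\text{ for all }j\}\setminus\{0\}\subseteq\Omega$. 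Adjoining one $\l_*\in\L$ with $\ph_0(\l_*)<0$ makes the resulting set $F$ nonempty without spoiling either property. This finishes the continuity of $\pi$.

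\emph{Assertions (a) and (b):} for (a), $\Posbar$ is continuous and, by \ref{pi pos}, injective; its inverse on $\im\Posbar$ is the restriction of the continuous map $\pi$, so $\Posbar$ is a homeomorphism onto its image. For (b), if $\UC(E)\neq\vide$ with $E$ finite, then by Lemma \ref{uce vide} some linear form $\ph$ satisfies $\ph(\l)>0$ for all $\l\in E$, and then $\Pos(-\ph)\in\UC(E)\cap\im\Posbar$. Hence $\im\Posbar$ meets every nonempty basic open set and is therefore dense in $\positif(\L)$.
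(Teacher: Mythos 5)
Your proof is correct, and its treatment of the continuity of $\Posbar$ and of assertions (a) and (b) coincides with the paper's: the same identification $\Posbar^{-1}(\UC(E))=\VC(E)$ (which is \ref{pos uc} in the paper), the same deduction of (a) from $\pi\circ\Posbar=\Id$, and the same density argument via the equivalence (1)$\Leftrightarrow$(3) of Lemma \ref{uce vide} (the paper's Lemma \ref{non vide}). Where you genuinely diverge is the continuity of $\pi$, which is indeed the hard point. The paper proceeds in two steps: it first proves that the sets $\VC(E)$, $E$ finite, form a basis of the topology of $V^*/\RM_{>0}$ (Lemma \ref{base ouverts}), and then shows that $\pi^{-1}(\VC(\l))$ is open for a single $\l$ (Lemma \ref{image inverse}); there, after rescaling $\l$ so that $\ph(\l\pm e_i)<0$ for a $\ZM$-basis $(e_1,\dots,e_n)$, the neighbourhood $\UC(E)$ with $E=\{\l\pm e_i\}$ works, the gap between strict and non-strict inequalities being bridged by the averaging identity $2\psi(\l)=\psi(\l+e_i)+\psi(\l-e_i)$, which forces $\psi=0$ and hence the contradiction $Y=\L$. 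You instead treat an arbitrary open $W$ directly and manufacture the basic neighbourhood $\UC(F)$ by compactness of the unit sphere of $V^*$: covering the compact set of unit forms outside the cone $\Omega=p^{-1}(W)$ by sets $\{\psi':\psi'(\l_\psi)>0\}$ with $\l_\psi\in\L$, $\ph_0(\l_\psi)<0$, a finite subcover gives exactly the right property, namely that the \emph{non-strict} dual cone of $F$, minus the origin, lies in $\Omega$ --- which is precisely what is available, since $\l\notin Y$ only yields $\ph_Y(\l)\le 0$ for a representative $\ph_Y$ of $\pi(Y)$. So both proofs hinge on the same strict/non-strict difficulty, resolved by a lattice-basis averaging trick in the paper and by sphere compactness plus a careful choice of cone in yours. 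Your route dispenses with Lemma \ref{base ouverts} entirely and handles all open sets at once; its cost is the appeal to finite-dimensional compactness and to the standard fact, asserted without proof in your text, that a nonempty open cone of $V$ contains a lattice point (it contains balls of arbitrarily large radius, so this is a one-line fix). Your handling of the edge cases (the case $\bar 0\in W$, where $\Omega=V^*$; and the case of an empty sphere cover, repaired by adjoining $\l_*$) is also correct.
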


\begin{proof} 
Soit $E$ une partie finie de $\L$. Alors 
\equat\label{pos uc}
\Posbar^{-1}(\UC(E))=\VC(E).
\endequat
Donc $\Posbar^{-1}(\UC(E))$ est un ouvert de $V^*/\RM_{>0}$. Donc 
$\Posbar$ est continue. 

Montrons maintenant que $\pi$ est continue. Nous proc\`ederons 
par \'etapes~:

\medskip

\begin{quotation}
\begin{lem}\label{base ouverts}
Les $\VC(E)$, o\`u $E$ parcourt l'ensemble des parties 
finies de $\L$, forment une base d'ouverts de $V^*/\RM_{>0}$. 
\end{lem}

\begin{proof}[Preuve du lemme \ref{base ouverts}]
Soit $\UC$ un ouvert de $V^*/\RM_{>0}$ et soit $\ph$ une forme lin\'eaire 
sur $V$ telle que $\phba \in \UC$. 
Nous devons montrer qu'il existe une partie finie $E$ de $V$ 
telle que $\phba \in \VC(E)$ et $\VC(E) \subset \UC$. 

Si $\ph=0$, alors $\UC=V^*/\RM_{>0}$ et le r\'esultat est clair. 
Nous supposerons donc que $\ph \neq 0$. Il existe alors $\l_0 \in \L$ 
tel que $\ph(\l_0) > 0$. Quitte \`a remplacer $\ph$ par un multiple 
positif, on peut supposer que $\ph(\l_0)=1$. Notons $\HC_0$ l'hyperplan 
affine $\{\psi \in V^*~|~\psi(\l_0)=1\}$. Alors l'application naturelle 
$\HC_0 \to V^*/\RM_{>0}$ induit un hom\'eomorphisme 
$\nu : \HC_0 \stackrel{\sim}{\longrightarrow} \VC(-\l_0)$. 
De plus, $\ph=\nu^{-1}(\ph) \in \HC_0$. Donc $\ph \in \nu^{-1}(\UC \cap \VC(-\l_0))$. 
Il suffit dons de v\'erifier que les intersections finies de demi-espaces 
ouverts {\it rationnels} (i.e. de la forme $\{\psi \in \HC_0~|~\psi(\l) > n\}$ o\`u 
$n \in \ZM$ et $\l \in \L \setminus \ZM \l_0$) 
forment une base de voisinages de l'espace affine $\HC_0$, ce qui est imm\'ediat.
\end{proof}
\end{quotation}

\medskip

Compte tenu du lemme \ref{base ouverts}, il suffit de montrer que, 
si $E$ est une partie finie de $\L$, alors $\pi^{-1}(\VC(E))$ 
est un ouvert de $\positif(\L)$. De plus, 
$$\VC(E)=\bigcap_{\l \in E} \VC(\l).$$
Par cons\'equent, la continuit\'e de $\pi$ 
d\'ecoulera du lemme suivant:

\medskip

\begin{quotation}
\begin{lem}\label{image inverse}
Si $\l \in \L$, alors $\pi^{-1}(\VC(\l))$ est un ouvert de 
$\positif(\L)$.
\end{lem}

\begin{proof}[Preuve du lemme \ref{image inverse}]
Soit $X \in \pi^{-1}(\VC(\l))$ et soit $\ph=\pi(X)$. 
Par d\'efinition, $\ph(\l) < 0$ et donc $\l \not\in X$. 
Soit $e_1$,\dots, $e_n$ une $\ZM$-base de $\L$. Il existe un entier 
naturel non nul $N$ tel que $\ph(\l \pm \DS{\frac{1}{N}e_i}) < 0$ pour 
tout $i$. Quitte \`a remplacer $\l$ par $N\l$, on peut supposer que 
$\ph(\l \pm e_i) < 0$ pour tout $i$. On pose alors 
$$E=\{\l+e_1,\l-e_1,\dots,\l+e_n,\l-e_n\}.$$
Alors $X \in \UC(E)$ par construction. Il reste \`a montrer 
que $\UC(E) \subseteq \pi^{-1}(\VC(\l))$. Soit $Y \in \UC(E)$ 
et posons $\psi = \pi(Y)$. Supposons de plus que $\psi \not\in \VC(\l)$. 
On a alors $\psi(\l) \ge 0$. D'autre part, $\psi(\l \pm e_i) \le 0$ pour tout $i$. 
Cela montre que $2\psi(\l)=\psi(\l+e_1)+\psi(\l-e_1) \le 0$, et donc 
$\psi(\l)=\psi(\l+e_i)=0$, et donc $\psi(e_i)=0$ pour tout $i$. Donc $\psi$ 
est nulle et donc $Y=\L$, ce qui contredit le fait que 
$Y \in \UC(E)$. Cela montre donc que $\psi \in \VC(\l)$, comme attendu.
\end{proof}
\end{quotation}

\medskip

Puisque $\pi$ et $\Posbar$ sont continues et v\'erifient 
$\pi \circ \Posbar = \Id_{V^*/\RM_{>0}}$, $\Posbar$ induit un 
hom\'eomorphisme sur son image. D'o\`u (a).

\medskip

L'assertion (b) d\'ecoule du lemme suivant (qui est une cons\'equence imm\'ediate 
de l'\'equivalence entre (1) et (3) dans le lemme \ref{uce vide}) et de \ref{pos uc}~:
\begin{quotation}
\begin{lem}\label{non vide}
Soit $E$ une partie finie de $\L$ telle que $\UC(E) \neq \vide$. 
Alors $\VC(E) \neq \vide$. 
\end{lem}

%
\end{quotation}

La preuve de la proposition \ref{pi pos continues} est termin\'ee. 
\end{proof}

\bigskip

Nous allons maintenant \'etudier les propri\'et\'es topologiques des 
applications $i_\phba$. 

\bigskip

\begin{prop}\label{proprietes topologie}
Soit $\ph \in V^*$ et supposons $\ph \neq 0$. Alors~:
\begin{itemize}
\itemth{a} 
$\DS{\bigcap_{\stackrel{\UC {\mathrm{~ouvert~de~}}\positif(\L)}{\Pos(\ph) \in \UC}} 
\UC = i_{\phba}\bigl(\positif(\Ker \ph|_\L)\bigr)} = \pi^{-1}(\phba)$.

\itemth{d} $i_{\phba}$ est continue et induit un hom\'eomorphisme sur son image.
\end{itemize}
\end{prop}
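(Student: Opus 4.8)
The plan is to dispatch (a) first, noting that the second equality $i_{\phba}\bigl(\positif(\Ker \ph|_\L)\bigr) = \pi^{-1}(\phba)$ is nothing more than the bijectivity statement of Proposition \ref{fibres pi}, so that all the work lies in the first equality. Write $\mathcal{I}$ for the intersection on the left. Since the sets $\UC(E)$ with $E$ finite form a base of the topology, and since a basic open $\UC(E)$ contains $\Pos(\ph)$ exactly when $\Pos(\ph) \cap E = \vide$, i.e. when $\ph(\l) < 0$ for every $\l \in E$, I would first argue that $\mathcal{I}$ equals the intersection of all such $\UC(E)$: any open $\UC \ni \Pos(\ph)$ contains some basic $\UC(E) \ni \Pos(\ph)$, so every $Y$ lying in all these basic opens lies in $\UC$ as well, and the reverse inclusion is trivial.

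Next I would identify this intersection explicitly. A positive part $Y$ lies in every $\UC(E)$ with $E$ finite and $\ph$ negative on $E$ iff $Y$ meets no such $E$, which (taking singletons) is equivalent to $Y$ containing no $\l$ with $\ph(\l) < 0$, i.e. to $Y \subseteq \Pos(\ph)$. Thus $\mathcal{I} = \{Y \in \positif(\L) \mid Y \subseteq \Pos(\ph)\}$. To match this with $\pi^{-1}(\phba)$ I would invoke Theorem \ref{hahn banach}: since $\ph \neq 0$ we have $\L \not\subseteq \Pos(\ph)$, so every $Y \in \mathcal{I}$ is a proper positive part; for such a $Y$ the inclusion $Y \subseteq \Pos(\ph)$ together with the uniqueness in Theorem \ref{hahn banach}(b) forces $\pi(Y) = \phba$, whence $Y \in \pi^{-1}(\phba)$. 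The reverse inclusion is immediate from the definition of $\pi$, so $\mathcal{I} = \pi^{-1}(\phba)$ and (a) follows.

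For the continuity half of (d) I would compute the preimage of a basic open directly: for a finite $E \subseteq \L$, a positive part $X$ of $\Ker \ph|_\L$ satisfies $i_{\phba}(X) = X \cup \Pos^+(\ph) \in \UC_\L(E)$ iff both $\Pos^+(\ph) \cap E = \vide$ and $X \cap E = \vide$. The first condition does not involve $X$; if it fails the preimage is empty, and if it holds then, since $X \subseteq \Ker \ph|_\L$, the second condition reads $X \cap (E \cap \Ker \ph|_\L) = \vide$. Hence $i_{\phba}^{-1}(\UC_\L(E))$ is either empty or equals $\UC_{\Ker \ph|_\L}(E \cap \Ker \ph|_\L)$, an open set, so $i_{\phba}$ is continuous.

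Finally, to see that $i_{\phba}$ is a homeomorphism onto its image $\pi^{-1}(\phba)$, I would observe that its inverse $Y \mapsto Y \cap \Ker \ph|_\L$ (Proposition \ref{fibres pi}) is exactly the restriction to $\pi^{-1}(\phba)$ of the map $\s^{*} : \positif(\L) \to \positif(\Ker \ph|_\L)$ attached to the inclusion $\s : \Ker \ph|_\L \hookrightarrow \L$, which is continuous by Proposition \ref{continu}. A continuous bijection with continuous inverse is a homeomorphism onto its image, giving (d). The only genuinely delicate point is the middle of (a): converting the topological intersection into the order-theoretic condition $Y \subseteq \Pos(\ph)$ and then pinning $\pi(Y)$ down to $\phba$ through the uniqueness in Theorem \ref{hahn banach}(b); everything else is bookkeeping with the base $\{\UC(E)\}$ and the functoriality of Proposition \ref{continu}.
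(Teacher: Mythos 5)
Your proof is correct, and it departs from the paper's in two places that are worth recording. In (a), after the same reduction to the basic opens $\UC(E)$, the paper proves the inclusion of the intersection into the image $I_\ph := i_\phba\bigl(\positif(\Ker \ph|_\L)\bigr)$ by contraposition: for $X \not\in I_\ph$ it extracts, from the \emph{proof} of Theorem \ref{hahn banach}(b), a point $\l \in \L$ with $\ph(\l) < 0$ and $\psi(\l) > 0$ (where $\psi$ represents $\pi(X)$), so that $\UC(\l)$ is a neighbourhood of $\Pos(\ph)$ avoiding $X$. You instead compute the intersection outright as $\{Y \in \positif(\L) \mid Y \subseteq \Pos(\ph)\}$ and identify it with $\pi^{-1}(\phba)$ using only the \emph{statement} of Theorem \ref{hahn banach}(b), the description of the image coming from Proposition \ref{fibres pi}. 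Your version is slightly cleaner: it never reopens the Hahn--Banach argument, and since $Y \subseteq \Pos(\ph)$ forces $Y \neq \L$, it silently disposes of the one degenerate case ($X = \L$, i.e. $\pi(X) = \bar{0}$), where the fact the paper cites — the existence of $\l$ with $\psi(\l) > 0$ — would fail. In (d), your continuity computation for $i_\phba$ is the paper's (your dichotomy on whether $\Pos^+(\ph)$ meets $E$ is the paper's condition that some $\l \in E$ has $\ph(\l) > 0$); for the inverse map, the paper checks directly that $\pi_\phba^{-1}(\UC_\phba(F)) = \UC(F) \cap I_\ph$ is open in $I_\ph$, while you recognize $Y \mapsto Y \cap \Ker \ph|_\L$ as the restriction to $\pi^{-1}(\phba)$ of $\s^*$ for the inclusion $\s : \Ker \ph|_\L \to \L$ and quote Proposition \ref{continu}. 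The two are equivalent — Proposition \ref{continu} rests on \ref{sigma continue}, which is exactly that computation — but reusing the functoriality statement is tidier than redoing it.
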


\begin{proof}
(a) Notons $I_\ph$ l'image de $i_\phba$. On a alors, d'apr\`es 
le lemme \ref{equivalent separant}, 
$$I_\ph=\{X \in \positif(\L)~|~\Pos^+(\ph) \subseteq X\}.\leqno{(*)}$$
Si $\UC$ est un ouvert contenant $\Pos(\ph)$, 
alors il existe une partie finie $E$ de $\L \setminus \Pos(\ph)$ 
telle que $\UC(E) \subseteq \UC$. Mais, si $X$ est dans l'image de 
$i_\phba$, alors $X \subseteq \Pos(\ph)$, donc $X \cap E = \vide$. 
Et donc $X \in \UC$, ce qui montre que 
$$I_\ph \subseteq
\bigcap_{\stackrel{\UC {\mathrm{~ouvert~de~}}\positif(\L)}{\Pos(\ph) \in \UC}} 
\UC.$$
Montrons l'inclusion r\'eciproque. Soit $X \in \positif(\L)$ tel 
que $X \not\in I_\ph$. Posons $\psi=\pi(X)$. Alors $\psiba \neq \phba$ 
donc, d'apr\`es la preuve du th\'eor\`eme \ref{hahn banach}, il existe 
$\l \in \L$ tel que $\ph(\l) < 0$ et $\psi(\l) > 0$. On a donc, 
d'apr\`es le lemme \ref{equivalent separant}, $X \not\in \UC(\l)$. 
D'autre part, $\Pos(\ph) \in \UC(\l)$. D'o\`u (a).

\medskip

Montrons (b). 
On note $\pi_\phba : I_\ph \to \positif(\Ker \ph|_\L)$, 
$X \mapsto X \cap \Ker \ph|_\L$. D'apr\`es la 
proposition \ref{fibres pi}, $\pi_\phba$ est la bijection 
r\'eciproque de $i_\phba : \positif(\Ker \ph|_\L) \to I_\ph$. 
Il nous faut donc montrer que $i_\phba$ et $\pi_\phba$ 
sont continues. Si $F$ est une partie finie de 
$\Ker \ph|_\L$, nous noterons $\UC_\phba(F)$ l'analogue de 
l'ensemble $\UC(F)$ d\'efini \`a l'int\'erieur de $\positif(\Ker \ph|_\L)$. 

Soit $E$ une partie finie de $\L$. Nous voulons montrer 
que $i_\phba^{-1}(\UC(E))$ est un ouvert de $\positif(\Ker \ph|_\L)$. 
S'il existe $\l \in E$ tel que 
$\ph(\l) > 0$, alors $\UC(E) \cap I_\ph = \vide$ (voir $(*)$). On peut 
donc supposer que $\ph(\l) \le 0$ pour tout $\l \in E$. 
Il est alors facile de v\'erifier que 
$$i_\phba^{-1}(\UC(E)) = \UC_\phba(E \cap \Ker \ph|_\L).$$
Donc $i_\phba$ est continue. 

Soit $F$ une partie finie de $\Ker \ph|_\L$. Alors 
$$\pi_\phba^{-1}(\UC_\phba(F)) =\UC(F) \cap I_\ph.$$
Donc $\pi_\phba$ est continue. 
\end{proof}

Nous allons r\'esumer dans le th\'eor\`eme suivant 
la plupart des r\'esutats obtenus dans cette sous-section.

\bigskip

\begin{theo}\label{theo:topologie}
Supposons $\L \neq 0$ et soit $\ph \in V^*$, $\ph \neq 0$. 
\begin{itemize}
\itemth{a} $\positif(\L)$ est connexe. Il n'est pas s\'epar\'e si $\L \neq 0$.

\itemth{b} Les applications 
$\pi : \positif(\L) \to V^*/\RM_{>0}$ et $\Posbar : V^*/\RM_{>0} \to \positif(\L)$ 
sont continues et v\'erifient $\pi \circ \Posbar = \Id_{V^*/\RM_{>0}}$.

\itemth{c} $\Posbar$ induit un hom\'eomorphisme sur son image~; cette 
image est dense dans $\positif(\L)$.

\itemth{d} $\pi^{-1}(\phba)$ est l'intersection des voisinages de $\Pos(\ph)$ 
dans $\positif(\L)$.

\itemth{e} $i_\phba$ est un hom\'eomorphisme.
\end{itemize}
\end{theo}

\bigskip

\section{Arrangements d'hyperplans}

\medskip

L'application continue $\Pos : V^* \to \positif(\L)$ 
a une image dense. Nous allons \'etudier ici comment se 
transpose la notion d'arrangement d'hyperplans (et les 
objets attach\'es~: facettes, chambres, support...) 
\`a l'espace topologique $\positif(\L)$ \`a travers $\Pos$. 
Cela nous permettra d'\'enoncer les conjectures sur 
les cellules de Kazhdan-Lusztig sous la forme la plus g\'en\'erale 
possible \cite[Conjectures A et B]{bonnafe}.

\bigskip

\subsection{Sous-espaces rationnels} 
Si $E$ est une partie de $\L$, on pose 
$$\LC(E)=\{X \in \positif(\L)~|~E \subset X \cap (-X)\}.$$
Si cela est n\'ecessaire, nous le noterons $\LC_\L(E)$. 
On appelle {\it sous-espace rationnel} de $\positif(\L)$ 
toute partie de $\positif(\L)$ de la forme $\LC(E)$, o\`u 
$E$ est une partie de $\L$. Si $\l \in \L\setminus \{0\}$, 
on notera $\HC_\l$ le sous-espace rationnel $\LC(\{\l\})$~: 
un tel sous-espace rationnel sera appel\'e un {\it hyperplan 
rationnel}. Notons que 
\equat\label{partage espace}
\positif(\L)= \UC(\l) \dotcup \HC_\l \dotcup \UC(-\l).
\endequat
La proposition suivante justifie quelque peu la terminologie~:

\begin{prop}\label{topo espace}
Soit $E$ est une partie de $\L$. Notons $\L(E)$ le sous-r\'eseau 
$\L \cap \sum_{\l \in E} \QM E$ de $\L$ et soit $\s_E : \L \to \L/\L(E)$ 
l'application canonique. Alors~:
\begin{itemize}
\itemth{a} $\LC(E)=\DS{\bigcap_{\l \in E\setminus\{0\}} \HC_\l} = 
\{X \in \positif(\L)~|~\L(E) \subseteq X\}$.

\itemth{b} $\LC(E)$ est ferm\'e dans $\positif(\L)$. 

\itemth{c} $\Pos^{-1}(\LC(E))=\{\ph \in V^*~|~\forall~\l \in E,~\ph(\l)=0\}=E^\perp$.

\itemth{d} $\Posbar(\pi(\LC(E)) \subseteq \LC(E)$. 

\itemth{e} $\Posbar^{-1}(\LC(E))=\pi(\LC(E))$.

\itemth{f} $\LC(E)=\overline{\Pos(\Pos^{-1}(\LC(E)))}$ .

\itemth{g} L'application $\s_E^* : \positif(\L/\L(E)) \to \positif(\L)$ 
a pour image $\LC(E)$ et induit un hom\'eomorphisme 
$\positif(\L/\L(E)) \stackrel{\sim}{\longrightarrow} \LC(E)$. 
\end{itemize}
\end{prop}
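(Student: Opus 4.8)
The plan is to prove~(a) first, since every other part reduces to it. The first equality in~(a) is immediate: $0 \in X \cap (-X)$ always holds (Lemme~\ref{proprietes positives}(b), applied to $X$ and to $-X$), so the condition defining $\LC(E)$ is vacuous at $\l=0$ and thus $\LC(E)=\bigcap_{\l \in E \setminus\{0\}} \HC_\l$. For the second equality the point is that $X \cap (-X)$ is a \emph{saturated} subgroup of $\L$: it is a subgroup by~(P3), and if $r\l \in X \cap (-X)$ with $r \in \ZM_{>0}$ then Lemme~\ref{proprietes positives}(c), applied to $X$ and to $-X$, gives $\l \in X \cap (-X)$. Since $\L(E)$ is the smallest saturated subgroup of $\L$ containing $E$, we get $E \subseteq X \cap (-X)$ if and only if $\L(E) \subseteq X \cap (-X)$; and because $\L(E)$ is a subgroup (so $\L(E)=-\L(E)$), $\L(E) \subseteq X \cap (-X)$ if and only if $\L(E) \subseteq X$. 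This yields~(a).

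Then~(b) follows at once: by~\ref{partage espace} one has $\HC_\l = \positif(\L) \setminus (\UC(\l) \cup \UC(-\l))$, which is closed since $\UC(\l)$ and $\UC(-\l)$ are basic opens, so $\LC(E)=\bigcap_{\l \in E \setminus\{0\}} \HC_\l$ is closed. For~(c) I would simply compute, using~\ref{noyau pos}, that $\Pos(\ph) \cap (-\Pos(\ph)) = \Ker(\ph|_\L)$, whence $\Pos(\ph) \in \LC(E)$ if and only if $E \subseteq \Ker(\ph|_\L)$, i.e. $\ph \in E^\perp$.

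The main obstacle is~(g). Put $\bar\L = \L/\L(E)$, so $\Ker \s_E = \L(E)$. Since $0$ lies in every $Z \in \positif(\bar\L)$, one has $\L(E) = \s_E^{-1}(0) \subseteq \s_E^{-1}(Z)$, and so by~(a) the image of $\s_E^*$ is contained in $\LC(E)$. Conversely, if $X \in \LC(E)$ then $\L(E) \subseteq X$ by~(a), and~(P2) gives $\l + \L(E) \subseteq X$ for every $\l \in X$; hence $X = \s_E^{-1}(\s_E(X))$, and a direct verification (using surjectivity of $\s_E$) shows $\s_E(X) \in \positif(\bar\L)$, so $X$ lies in the image. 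Surjectivity of $\s_E$ also yields injectivity of $\s_E^*$, and continuity is Proposition~\ref{continu}; thus $\s_E^*$ is a continuous bijection onto $\LC(E)$. The delicate step is continuity of the inverse $\nu : X \mapsto \s_E(X)$. For this I would fix a lift $f \in \L$ of each element $\bar f$ of a finite set $\bar F \subseteq \bar\L$; since $X$ is a union of $\L(E)$-cosets, $\bar f \notin \s_E(X)$ if and only if $f \notin X$, so, writing $F$ for the finite set of chosen lifts, one obtains $\nu^{-1}(\UC_{\bar\L}(\bar F)) = \UC_\L(F) \cap \LC(E)$, which is relatively open. As the $\UC_{\bar\L}(\bar F)$ form a basis of $\positif(\bar\L)$, $\nu$ is continuous and~(g) follows.

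The remaining parts are then short. For~(d): if $X \in \LC(E)$ and $\phba = \pi(X)$, then $X \subseteq \Pos(\ph)$ forces both $E \subseteq X \subseteq \Pos(\ph)$ and $E \subseteq -X \subseteq -\Pos(\ph)$, so $E \subseteq \Ker(\ph|_\L)$ and $\Posbar(\phba)=\Pos(\ph) \in \LC(E)$ by~(c). Part~(e) is then formal: the inclusion $\supseteq$ is~(d), and $\subseteq$ uses $\pi \circ \Posbar = \Id$ (equation~\ref{pi pos}). Finally, for~(f), by~(c) it remains to show $\LC(E) = \overline{\Pos(E^\perp)}$; the inclusion $\overline{\Pos(E^\perp)} \subseteq \LC(E)$ holds because $\Pos(E^\perp) \subseteq \LC(E)$ (by~(c)) and $\LC(E)$ is closed (by~(b)). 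For the reverse I transport through the homeomorphism of~(g): writing each $\ph \in E^\perp$ as $\bar\ph \circ (\s_E)_\RM$ with $\bar\ph$ a linear form on $\RM \otimes_\ZM \bar\L$, one checks $\Pos(\ph) = \s_E^*(\Pos(\bar\ph))$, so $(\s_E^*)^{-1}(\Pos(E^\perp))$ is exactly the image of the $\Pos$-map of $\bar\L$, which is dense in $\positif(\bar\L)$ by Proposition~\ref{pi pos continues}(b). As $\s_E^*$ is a homeomorphism onto $\LC(E)$, it follows that $\Pos(E^\perp)$ is dense in $\LC(E)$, giving $\LC(E) \subseteq \overline{\Pos(E^\perp)}$.
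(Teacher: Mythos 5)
Your proof is correct, and on parts (a), (b), (c), (d), (e) and (g) it follows essentially the same route as the paper: (a) rests on Lemme \ref{proprietes positives}(c) exactly as in the paper (your ``saturated subgroup'' phrasing just makes explicit what the paper leaves implicit), (b) and (c) are the same direct computations, (d) and (e) are the same formal manipulations using \ref{pi pos}, and your proof of (g) — continuity of $\s_E^*$ from Proposition \ref{continu}, inverse $X \mapsto \s_E(X)=X/\L(E)$, continuity of the inverse checked on basic open sets by choosing lifts in $\L$ of a finite subset of $\L/\L(E)$ — coincides with the paper's. The genuinely different point is (f). The paper argues directly: given a finite $F \subseteq \L$ with $\UC(F) \cap \Pos(E^\perp)=\vide$, it asserts that this disjointness is equivalent to ``$\forall\,\ph \in E^\perp$, $\forall\,\l \in F$, $\ph(\l) \ge 0$'', deduces $F \subseteq (E^\perp)^\perp \cap \L = \L(E)$, and concludes $\UC(F) \cap \LC(E)=\vide$. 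You instead prove (g) first, factor every $\ph \in E^\perp$ as $\bar\ph \circ (\s_E)_\RM$ with $\Pos(\ph)=\s_E^*(\Pos(\bar\ph))$, so that $\Pos(E^\perp)$ is the image under $\s_E^*$ of the image of the $\Pos$-map of $\L/\L(E)$, and then transport the density statement of Proposition \ref{pi pos continues}(b), applied to $\L/\L(E)$, through the homeomorphism $\s_E^*$. This costs you a dependence of (f) on (g) and on Proposition \ref{pi pos continues}(b), whereas the paper's computation is self-contained; but it buys real robustness: the disjointness $\UC(F) \cap \Pos(E^\perp)=\vide$ only says that for every $\ph \in E^\perp$ there \emph{exists} $\l \in F$ with $\ph(\l)\ge 0$, and the paper's rewriting of this as a statement about \emph{all} $\l \in F$ can fail — for $\L=\ZM^2$, $E=\{(1,0)\}$, $F=\{(0,1),(1,-1)\}$ one has $\UC(F)\cap\Pos(E^\perp)=\vide$ and yet $F \not\subseteq \L(E)$. (The conclusion $\UC(F)\cap\LC(E)=\vide$ is still true there, precisely because (f) holds, but it is your density argument rather than the paper's intermediate step that establishes it.) So for (f) your route is not only different but arguably the cleaner one.
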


\begin{proof}
La premi\`ere \'egalit\'e de (a) est imm\'ediate. La deuxi\`eme 
d\'ecoule de la proposition \ref{proprietes positives} (c). 
(b) d\'ecoule de (a) et de \ref{partage espace}.
(c) est tout aussi clair. 

\medskip

(d) Si $X \in \Posbar(\pi(\LC(E))$, alors il existe 
$Y \in \LC(E)$ tel que $X=\Posbar(\pi(Y))$. Posons $\phba = \pi(Y)$, 
o\`u $\ph \in V^*$. Alors $E \subseteq Y \cap (-Y)$ et 
$Y \subseteq \Pos(\ph)$. Or, $\ph(\l)=0$ si $\l \in Y \cap (-Y)$, 
donc $X=\Pos(\ph) \in \LC(E)$. 

\medskip

(e) D'apr\`es (d), on a $\pi(\L(E)) \subseteq \Posbar^{-1}(\LC(E))$. 
R\'eciproquement, soit $\ph$ un \'el\'ement de $\Pos^{-1}(\LC(E))$, alors 
$\phba=\pi(\Posbar(\phba)) \in \pi(\LC(E))$. D'o\`u (e).

\medskip

(f) Notons $\FC_E=\Pos(\Pos^{-1}(\LC(E)))$. 
On a $\FC_E \subseteq \LC(E)$ donc 
il d\'ecoule du (a) que $\overline{\FC}_E \subseteq \LC(E)$. 
R\'eciproquement, soit $F$ une partie finie de $\L$ telle que $\UC(F) \cap \FC_E = \vide$. 
Nous devons montrer que $\UC(F) \cap \LC(E)=\vide$. Or, le fait que $\UC(F) \cap \FC_E=\vide$ 
est \'equivalent \`a l'assertion suivante (voir (c))~:
$$\forall~\ph \in E^\perp,~\forall~\l \in F,~\ph(\l) \ge 0.$$
Or, si $\ph \in E^\perp$, alors $-\ph \in E^\perp$, ce qui implique que~:
$$\forall~\ph \in E^\perp,~\forall~\l \in F,~\ph(\l) = 0.$$
En d'autres termes, $F \subseteq (E^\perp)^\perp \cap \L = \L(E)$. Mais, 
si $X \in \LC(E)$, alors $\L(E) \subseteq X$ d'apr\`es (a). Donc 
$X \not\in \UC(F)$, comme esp\'er\'e.

\medskip

(g) Le fait que l'image de $\s_E^*$ soit 
$\LC(E)$ d\'ecoule de (a). D'autre part, $\s_E^*$ est continue d'apr\`es la 
proposition \ref{continu}. Notons 
$$\fonction{\g_E}{\LC(E)}{\positif(\L/\L(E))}{X}{X/\L(E).}$$
Alors $\g_E$ est la r\'eciproque de $\s_E^*$. Il ne nous reste 
qu'\`a montrer que $\g_E$ est continue. Soit donc $F$ une partie finie de $\L/\L(E)$ 
et notons $\Fti$ un ensemble de repr\'esentants des \'el\'ements de $F$ dans $\L$.
On a 
\eqna
\g_E^{-1}(\UC_{\L/\L(E)}(F))&=&\{X \in \LC(E)~|~\forall~\l \in F,~\l \not\in X/\L(E)\}\\
&=&\{X \in \positif(\L)~|~\L(E) \subseteq X\text{ et }\forall~\l \in F,~\l \not\in X/\L(E)\}\\
&=&\{X \in \positif(\L)~|~\L(E) \subseteq X\text{ et }\forall~\l \in \Fti,~\l \not\in X\}\\
&=&\{X \in \LC(E)~|~\forall~\l \in \Fti,~\l \not\in X\}\\
&=&\LC(E) \cap \UC_\L(\Fti).
\endeqna
Donc $\g_E^{-1}(\UC_{\L/\L(E)})$ est un ouvert de $\LC(E)$. Cela montre la continuit\'e de 
$\g_E$.
\end{proof}

\bigskip

\subsection{Demi-espaces} 
Soit $\HC$ un hyperplan rationnel de $\positif(\L)$ et soit $\l\in \L\setminus\{0\}$ 
tel que $\HC=\HC_\l$. D'apr\`es \ref{partage espace}, 
l'hyperplan $\HC$ nous d\'efinit une unique relation d'\'equivalence 
$\smile_{\!\HC}$ sur $\positif(\L)$ pour laquelle les classes d'\'equivalence 
sont $\UC(\l)$, $\HC$ et $\UC(-\l)$~: notons que cette relation 
ne d\'epend pas du choix de $\l$. De plus~:

\bigskip

\begin{prop}\label{composantes connexes}
$\HC$ est un ferm\'e de $\positif(\L)$ et $\UC(\l)$ et $\UC(-\l)$ 
sont les composantes connexes de $\positif(\L)\setminus \HC$. De plus
$$\overline{\UC(\l)}=\UC(\l) \cup \HC_\l.$$
\end{prop}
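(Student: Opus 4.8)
The plan is to split the statement into three independent pieces: that $\HC$ is closed, that $\UC(\l)$ and $\UC(-\l)$ are the connected components of the complement, and the closure formula. The first piece is immediate: since $\HC=\HC_\l=\LC(\{\l\})$, Proposition~\ref{topo espace}(b) gives that $\HC$ is closed. By the decomposition \ref{partage espace}, its complement is $\positif(\L)\setminus\HC = \UC(\l)\dotcup\UC(-\l)$, and both $\UC(\l)$ and $\UC(-\l)$ are open, being of the form $\UC(E)$ with $E$ finite. Thus the complement is partitioned into two disjoint nonempty open sets, and it remains only to prove that each of them is connected; by the symmetry $\l\leftrightarrow-\l$ it suffices to treat $\UC(\l)$.

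For the connectedness of $\UC(\l)$ I would produce a connected dense subset. Consider the open half-space $W=\{\ph\in V^*~|~\ph(\l)<0\}$; since $\l\neq 0$ this is a nonempty convex, hence connected, subset of $V^*$. As $\Pos$ is continuous (it factors as $\Posbar\circ p$, both continuous by Proposition~\ref{pi pos continues}), the image $\Pos(W)$ is connected. Now for $\ph\in V^*$ one has $\l\notin\Pos(\ph)$ if and only if $\ph(\l)<0$, so $\Pos(W)\subseteq\UC(\l)$, and in fact $\Pos(W)$ is exactly the intersection of $\UC(\l)$ with the image of $\Pos$. Since that image is dense in $\positif(\L)$ (Proposition~\ref{pi pos continues}(b)) and $\UC(\l)$ is open, the trace $\Pos(W)$ is dense in $\UC(\l)$. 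A connected subset that is dense in $\UC(\l)$ forces $\UC(\l)$ to be connected, as its closure within $\UC(\l)$ is connected and equals $\UC(\l)$. This is the one genuinely non-formal step, and I expect it to be the main obstacle: the crux is to recognize that connectedness should be transported from the manifestly connected half-space $W$ through $\Pos$ and then propagated by density, rather than attempting to argue directly on positive parts.

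With both $\UC(\l)$ and $\UC(-\l)$ connected, clopen (each is the complement of the other inside $\positif(\L)\setminus\HC$) and nonempty, they are precisely the connected components of $\positif(\L)\setminus\HC$. Finally, the closure formula follows from Corollaire~\ref{adherence u} applied to $E=\{\l\}$: I first check $\UC(\l)\neq\vide$ via Lemme~\ref{uce vide}, the only obstruction being $r\l=0$ with $r\in\ZM_{>0}$, which is impossible since $\Lambda$ is a lattice and $\l\neq 0$; the corollary then yields $\overline{\UC(\l)}=\positif(\L)\setminus\UC(-\l)$, which by \ref{partage espace} equals $\UC(\l)\cup\HC_\l$.
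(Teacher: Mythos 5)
Your proof is correct and follows essentially the same route as the paper: both arguments derive the connectedness of $\UC(\l)$ from the connectedness of the half-space $\Pos^{-1}(\UC(\l))=\{\ph \in V^*~|~\ph(\l)<0\}$ together with the continuity of $\Pos$ and the density of its image (Lemme~\ref{non vide}), and both obtain the closure formula as a special case of Corollaire~\ref{adherence u}. The only difference is one of packaging: the paper pulls back a hypothetical clopen partition $\UC(\l)=\UC \coprod \VC$ and uses Lemme~\ref{non vide} to rule out a nonempty piece with empty preimage, whereas you push the half-space forward and invoke the standard fact that a space containing a dense connected subset is connected.
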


\bigskip

\begin{proof}
La derni\`ere assertion est un cas particulier du corollaire 
\ref{adherence u}. 

\medskip

Montrons pour finir que $\UC(\l)$ est connexe. Soient $\UC$ et $\VC$ deux ouverts 
de $\UC(\l)$ tels que $\UC(\l)=\UC \coprod \VC$. Alors 
$$\Pos^{-1}(\UC(\l))=\Pos^{-1}(\UC) \coprod \Pos^{-1}(\VC).$$
Mais $\Pos^{-1}(\UC(\l))=\{\ph \in V^*~|~\ph(\l) < 0\}$. Donc 
$\Pos^{-1}(\UC(\l))$ est connexe. Puisque $\Pos$ est continue, 
cela implique que $\Pos^{-1}(\UC)=\vide$ ou $\Pos^{-1}(\VC)=\vide$. 
Le lemme \ref{non vide} implique que $\UC=\vide$ ou $\VC=\vide$.
\end{proof}

\bigskip

Si $X \in \positif(\L)$, nous noterons $\DC_\HC(X)$ la classe 
d'\'equivalence de $X$ sous la relation $\smile_{\! \HC}$. Il r\'esulte 
de la proposition \ref{composantes connexes} que $\overline{\DC_\HC(X)}$ est 
une r\'eunion de classes d'\'equivalences pour $\smile_{\! \HC}$.

\bigskip

\subsection{Arrangements} 
Nous travaillerons d\'esormais sous l'hypoth\`ese suivante~:

\bigskip

\begin{quotation}
{\it Fixons maintenant, et ce jusqu'\`a la fin de cette section, 
un ensemble {\bfit fini} $\AG$ d'hyperplans rationnels de $\positif(\L)$.}
\end{quotation}

\bigskip

Nous allons red\'efinir, dans notre espace $\positif(\L)$, les notions 
de {\it facettes}, {\it chambres} et {\it faces} associ\'ees \`a $\AG$, 
de fa\c{c}on similaire \`a ce qui se fait pour les arrangements 
d'hyperplans dans un espace r\'eel 
\cite[Chapitre V, \S 1]{bourbaki}. Les propri\'et\'es des 
applications $\pi$ et $\Posbar$ \'etablies pr\'ec\'edemment 
permettent facilement de d\'emontrer les r\'esultats 
analogues. 

Nous d\'efinissons la relation $\smile_\AG$ sur $\positif(\L)$ de la 
fa\c{c}on suivante~:
si $X$ et $Y$ sont deux \'el\'ements de $\positif(\L)$, nous \'ecrirons 
$X \smile_\AG Y$ si $X \smile_{\!\HC} Y$ pour tout $\HC \in \AG$. 
Nous appellerons {\it facettes} (ou {\it $\AG$-facettes}) les classes 
d'\'equivalence pour la relation $\smile_\AG$. Nous appellerons {\it chambres} (ou
{\it $\AG$-chambres}) les facettes qui ne rencontrent aucun hyperplan de
$\AG$. Si $\FC$ est une facette, nous noterons
$$\LC_{\AG} ( \FC) = \bigcap_{\overset{\HC \in \AG}{\FC \subset \HC}} \HC,$$
avec la convention habituelle que $\LC_{\AG} ( \FC) = \positif (\L)$ si
$\FC$ est une chambre. Nous l'appellerons le {\it support} de $\FC$ et
nous appellerons {\it dimension} de $\FC$ l'entier 
$$\dim \FC = \dim_{\RM} \Pos^{- 1} ( \LC_{\AG} ( \FC)) .$$
De m\^eme nous appellerons {\it codimension} de $\FC$ l'entier
$$\codim \FC=\dim_\RM V - \dim \FC.$$
Avec ces d\'efinitions, une chambre est une facette de codimension $0$.

\bigskip

\begin{prop}\label{facettes}
Soit $\FC$ une facette et soit $X \in \FC$. Alors~:
\begin{itemize}
\itemth{a} $\FC=\DS{\bigcap_{\HC \in \AG}} \DC_\HC(X)$.

\itemth{b} $\overline{\FC}=\DS{\bigcap_{\HC \in \AG}} \overline{\DC_\HC(X)}$.

\itemth{c} $\overline{\FC}$ est la r\'eunion de $\FC$ et de facettes de dimension 
strictement inf\'erieures.

\itemth{d} Si $\FC'$ est une facette telle que $\overline{\FC}=\overline{\FC}'$, 
alors $\FC=\FC'$.
\end{itemize}
\end{prop}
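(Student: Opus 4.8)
The plan is to read (a) off the definitions, to reduce (b) to the already-proved Corollaire~\ref{adherence u} by quotienting out the support, and then to obtain (c) and (d) from (b) together with a dimension count.

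Part (a) is a reformulation of the definitions: $\FC$ is by construction the $\smile_\AG$-class of $X$, and $Y\smile_\AG X$ means $Y\smile_{\!\HC}X$ for every $\HC\in\AG$, i.e. $Y\in\DC_\HC(X)$ for every $\HC$; this is exactly membership in $\bigcap_{\HC\in\AG}\DC_\HC(X)$.

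For (b) the inclusion $\overline{\FC}\subseteq\bigcap_{\HC}\overline{\DC_\HC(X)}$ is clear from (a) and monotonicity of the closure, so the work is in computing $\overline{\FC}$ explicitly. I would let $E_0=\{\l_\HC~|~\FC\subset\HC_{\l_\HC}\}$ be a choice of defining vectors of the support hyperplanes, so that $\LC_\AG(\FC)=\LC(E_0)$. By la proposition~\ref{topo espace}(b) this set is closed and contains $\FC$, so $\overline{\FC}$ may be computed inside $\LC(E_0)$, and by la proposition~\ref{topo espace}(g) the quotient map $\g:W\mapsto W/\L(E_0)$ is a hom\'eomorphisme $\LC(E_0)\stackrel{\sim}{\longrightarrow}\positif(\L/\L(E_0))$. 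Orienting each transverse hyperplane so that $\FC\subset\UC(\l_\HC)$, one has $\FC=\LC(E_0)\cap\UC(E_1)$ with $E_1=\{\l_\HC~|~\FC\not\subset\HC_{\l_\HC}\}$, and since each such $\l_\HC\notin\L(E_0)$ its image $\s_{E_0}(\l_\HC)$ is nonzero; the transport formula inside the proof of la proposition~\ref{topo espace}(g) then identifies $\g(\FC)$ with $\UC_{\L/\L(E_0)}(\s_{E_0}(E_1))$, a set of the form $\UC(F)$ which is nonempty as it contains $\g(X)$. Corollaire~\ref{adherence u} computes the closure of this set as the complement of $\bigcup\UC(-\s_{E_0}(\l_\HC))$, and transporting back through $\g$ (again by the transport formula, using $-\l_\HC$ as representatives) yields $\overline{\FC}=\LC(E_0)\setminus\bigcup_{\FC\not\subset\HC}\UC(-\l_\HC)$. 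On the other hand, by la proposition~\ref{composantes connexes} (recall $\overline{\UC(\l)}=\UC(\l)\cup\HC_\l$) one has $\bigcap_{\HC}\overline{\DC_\HC(X)}=\bigcap_{\FC\subset\HC}\HC\ \cap\ \bigcap_{\FC\not\subset\HC}(\positif(\L)\setminus\UC(-\l_\HC))$, whose first factor is exactly $\LC(E_0)$; the two expressions coincide, proving (b).

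For (c), each $\overline{\DC_\HC(X)}$ is a union of $\smile_{\!\HC}$-classes (the remark following la proposition~\ref{composantes connexes}), so by (b) the set $\overline{\FC}$ is $\smile_\AG$-satur\'e, i.e. a union of facettes. It remains to compare dimensions. Let $\FC'\subseteq\overline{\FC}$ be a facette with representative $X'$. For every $\HC$ with $\FC\subset\HC$ we have $\overline{\DC_\HC(X)}=\HC$, hence $X'\in\HC$, so the support of $\FC'$ contains that of $\FC$. If the two supports coincide, then for each transverse $\HC$ the point $X'$ is not on $\HC$ and lies in $\overline{\DC_\HC(X)}=\UC(\l_\HC)\cup\HC_{\l_\HC}$, hence in $\UC(\l_\HC)$, on the same side as $X$; thus $X'\smile_\AG X$ and $\FC'=\FC$. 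Otherwise the support of $\FC'$ is $\LC(E_0')$ with $E_0'$ strictly larger, containing some $\l_0$ defining a hyperplane that does not contain $\FC$. By la proposition~\ref{topo espace}(a), $\FC\not\subset\HC_{\l_0}$ forces $\l_0\notin\L(E_0)$, and since $\l_0$ is integral this gives $\l_0\notin\mathrm{span}_\RM(E_0)$; using la proposition~\ref{topo espace}(c), which identifies $\Pos^{-1}(\LC(E))$ with $E^\perp$, the support of $\FC'$ has strictly larger codimension in $V^*$, so $\dim\FC'<\dim\FC$. This proves (c), and (d) is then formal: if $\overline{\FC}=\overline{\FC'}$ then $\FC\subseteq\overline{\FC'}$ and $\FC'\subseteq\overline{\FC}$, and applying (c) in both directions rules out the strict inequalities, leaving $\FC=\FC'$.

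The main obstacle is the reverse inclusion in (b); its whole content lies in organizing the reduction so that the support hyperplanes are quotiented out and $\FC$ becomes a set of the form $\UC(F)$ in $\positif(\L/\L(E_0))$, at which point Corollaire~\ref{adherence u} applies verbatim. The one delicate point in (c) is the passage from $\l_0\notin\L(E_0)$ to $\l_0\notin\mathrm{span}_\RM(E_0)$, which relies on $\l_0$ being integral, so that membership in the real span already forces membership in the rational span and hence in the saturated sous-r\'eseau $\L(E_0)$.
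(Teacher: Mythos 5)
Your proof is correct and follows essentially the same route as the paper's: (a) from the definitions, (b) by reducing to Corollaire~\ref{adherence u} via the hom\'eomorphisme $\positif(\L/\L(E_0)) \simeq \LC(E_0)$ of la proposition~\ref{topo espace}(g) after splitting $\AG$ into support and transverse hyperplans, then (c) from (b) together with the codimension-drop argument and (d) formally from (c). Your write-up is in fact somewhat more careful than the paper's on two points it leaves implicit — the nonemptiness hypothesis needed to apply Corollaire~\ref{adherence u}, and the passage from $\l_0\notin\L(E_0)$ to linear independence over $\RM$ via saturation of $\L(E_0)$.
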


\bigskip

\begin{proof}
(a) est une cons\'equence des d\'efinitions. Montrons (b). Posons 
$$\AG_1=\{\HC \in \AG~|~\FC \subseteq \HC\}$$
$$\AG_2=\AG \setminus \AG_1.$$
Pour tout $\HC \in \AG$, on fixe un \'el\'ement $\l(\HC) \in \L$ tel que 
$\HC=\HC_{\l(\HC)}$~: si de plus $\HC \in \AG_2$, on choisit $\l(\HC)$ de sorte 
que $\FC \subseteq \UC(\l(\HC))$. On pose 
$$E_i=\{\l(\HC)~|~\HC \in \AG_i\}.$$ 
Par cons\'equent, 
$$\FC=\LC \cap \UC(E_i).$$
Puisque $\LC$ est ferm\'e, $\overline{\FC}$ est aussi l'adh\'erence de $\FC$ dans 
$\LC$. En utilisant alors l'hom\'eomorphisme 
$\s_{E_1}^* : \positif(\L/\L(E_1)) \stackrel{\sim}{\longrightarrow} \LC$ 
de la proposition \ref{topo espace} (g), 
on se ram\`ene \`a calculer l'adh\'erence de $\s_{E_1}^{* -1}(\FC)$ 
dans $\positif(\L/\L(E_1))$. Mais 
$$\s_{E_1}^{* -1}(\FC)=
\s_{E_1}^{* -1}(\UC_\L(E_2))=\UC_{\L/\L(E_1)}(\s_{E_1}(E_2)).$$
Or, d'apr\`es le corollaire \ref{adherence u}, on a 
$$\overline{\UC_{\L/\L(E_1)}(\s_{E_1}(E_2))} = 
\positif(\L/\L(E_1)) \setminus 
\bigl(\bigcup_{\l \in E_2} \UC_{\L/\L(E_1)}(\s_{E_1}(-\l))\bigr).$$
Par cons\'equent,
$$\overline{\FC}=\LC \cap \s_{E_1}^*\Bigl(\positif(\L/\L(E_1)) \setminus 
\bigl(\bigcup_{\l \in E_2} \UC_{\L/\L(E_1)}(\s_{E_1}(-\l))\bigr)\Bigr),$$
et donc
$$\overline{\FC}=\LC \cap 
\Bigl(\bigcap_{\l \in E_2} \bigl(\positif(\L)\setminus
\UC_\L(-\l)\bigr) = 
\bigcap_{\l \in E_1 \cup E_2} \overline{\DC_{\HC_\l}(X)},$$
comme attendu.

\medskip

Montrons maintenant (c). D'apr\`es (b), $\overline{\FC}$ est bien une r\'eunion 
de facettes. Si de plus $\FC'$ est une facette diff\'erente de $\FC$ et contenue 
dans $\overline{\FC}$, l'assertion (b) montre qu'il existe $\HC \in \AG_2$ 
tel que $\FC' \subseteq \HC$. Donc $\FC' \subseteq \LC_\AG(\FC) \cap \HC$, 
et $\dim \Pos^{-1}\bigl(\LC_\AG(\FC) \cap \HC\bigr) = 
\dim \Pos^{-1}\bigl(\LC_\AG(\FC)\bigr)-1$. D'o\`u (c). 

\medskip

L'assertion (d) d\'ecoule imm\'ediatement de (c). 
\end{proof}

\bigskip

On d\'efinit une relation $\infspe$ (ou $\preccurlyeq_\AG$ s'il est n\'ecessaire de 
pr\'eciser) entre les facettes~: on \'ecrit $\FC \infspe \FC'$ si 
$\overline{\FC} \subseteq \overline{\FC}'$ (c'est-\`a-dire si $\FC \subseteq \overline{\FC}'$). 
La proposition \ref{facettes} (d) montre que~:

\bigskip

\begin{coro}\label{ordre facettes}
La relation $\infspe$ entre les facettes est une relation d'ordre.
\end{coro}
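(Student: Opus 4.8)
The plan is to check the three defining properties of a partial order for the relation $\infspe$, keeping in mind that $\FC \infspe \FC'$ means $\overline{\FC} \subseteq \overline{\FC}'$, equivalently $\FC \subseteq \overline{\FC}'$ (as noted just before the statement, the two formulations agree because $\overline{\FC}'$ is closed and $\FC \subseteq \overline{\FC}$). I expect reflexivity and transitivity to be completely formal, with all the actual mathematical content hiding in antisymmetry.

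First I would dispose of reflexivity: $\overline{\FC} \subseteq \overline{\FC}$ holds trivially, so $\FC \infspe \FC$ for every facette $\FC$. Next, transitivity reduces to transitivity of set inclusion: if $\FC \infspe \FC'$ and $\FC' \infspe \FC''$, then $\overline{\FC} \subseteq \overline{\FC}'$ and $\overline{\FC}' \subseteq \overline{\FC}''$, whence $\overline{\FC} \subseteq \overline{\FC}''$, i.e. $\FC \infspe \FC''$. Neither step invokes anything beyond the definition of $\infspe$, so both are immediate.

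The only real content is antisymmetry, and this is exactly where Proposition \ref{facettes}~(d) is meant to be used. Suppose $\FC \infspe \FC'$ and $\FC' \infspe \FC$. By definition $\overline{\FC} \subseteq \overline{\FC}'$ and $\overline{\FC}' \subseteq \overline{\FC}$, so the two closures coincide, $\overline{\FC} = \overline{\FC}'$. Proposition \ref{facettes}~(d) asserts precisely that two facettes with equal closures are equal, and therefore $\FC = \FC'$. Combining the three properties shows that $\infspe$ is an order relation on the set of $\AG$-facettes.

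Thus the main — indeed the only — obstacle, namely antisymmetry, has already been discharged by the earlier structural result: Proposition \ref{facettes}~(c)–(d) guarantees that $\overline{\FC}$ decomposes as $\FC$ together with facettes of strictly smaller dimension, so a facette is recoverable from its closure. Once that is in hand, the corollary is a purely bookkeeping assembly of reflexivity, transitivity, and antisymmetry, and I would expect the written proof to be just a few lines long.
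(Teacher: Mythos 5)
Your proof is correct and matches the paper's (implicit) argument exactly: the paper introduces the corollary with the remark that Proposition \ref{facettes}~(d) yields the result, precisely because antisymmetry is the only non-formal point and it is exactly the statement that equal closures force equal facettes. Reflexivity and transitivity are, as you say, immediate from the definition via inclusion of closures.
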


\bigskip

\end{document}